\newtheorem{Theorem}{Theorem}[section]
\newtheorem{Definition}[Theorem]{Definition}
\newtheorem{Lemma}[Theorem]{Lemma}
\newtheorem{Corollary}[Theorem]{Corollary}
\begin{document}

\title{Growth Rate Gap for Stable Subgroups}
\author{Suzhen Han and Qing Liu}
\date{ }
\maketitle

\begin{abstract}
 We prove that stable subgroups of Morse local-to-global groups exhibit a growth gap. That is, the growth rate of an infinite-index stable subgroup is strictly less than the growth rate of the ambient Morse local-to-global group.
 This generalizes a result of Cordes, Russell, Spriano, and Zalloum in the sense that we removed the additional torsion-free or residually finite assumptions.
 %In our paper, the additional assumptions, virtually torsion-free and residually finite can be removed. 
 The Morse local-to-global groups are a very broad class of groups, including mapping class groups, CAT(0) groups, closed $3$-manifold groups, certain relatively hyperbolic groups, virtually solvable groups, etc.
% and any finitely generated group with an infinite order central element.

%The Morse local-to-global property is introduced by Russell, Spriano, and Tran, who also showed mapping class groups, Teichm\"{u}ller space, CAT(0) space, closed $3$-manifold groups all admits such a property. Using a direct analysis, we obtain that the growth rates of a Morse local-to-global group and its infinite-index stable subgroup have a gap, generalizing a result of Cordes, Russell, Spriano, and Zalloum that such a gap exists if moreover the group is virtually torsion-free or the stable subgroup is residually finite by considering the automatic structures on Morse geodesics.
\end{abstract}

\renewcommand{\thefootnote}{\alph{footnote}}
\setcounter{footnote}{-1} \footnote{2000 Mathematics Subject Classification: 20F65, 20F67}
\renewcommand{\thefootnote}{\alph{footnote}}
\setcounter{footnote}{-1} \footnote{Keywords: Morse local-to-global property, stable subgroup, growth rate, Morse boundary}

\section{Introduction}
%\subsection{Background}

Generalizing the phenomena that long local quasi-geodesics are global quasi-geodesics in hyperbolic space by Gromov\cite{Gromov87}, the Morse local-to-global property for a metric space is introduced by Russell, Spriano, and Tran\cite{RST22}, which requires that long local Morse quasi-geodesics are globally Morse quasi-geodesics. They proved in the same paper that the following spaces all satisfy such property: mapping class groups, Teichm\"{u}ller space with either the Weil-Petersson or Teichm\"{u}ller metric, graph product of hyperbolic groups, virtually solvable groups, closed 3-manifold groups, etc.

Growth in groups has been extensively studied recently, and the literature contains numerous results. In this paper, we investigate the growth of stable subgroups, which are geometrically well-behaved in groups that exhibit the Morse local-to-global property.

Suppose that a group $G$ acts properly and isometrically on a proper
geodesic metric space $(\mathrm X, d)$. The group $G$ is assumed to be \textit{non-elementary}: $G$ is not virtually cyclic.
We fix a basepoint $o\in X$, and then we consider the induced left-invariant pseudo-metric $d_G$ on group $G$ as follows: $d_G(g_1,g_2)=d(g_1o,g_2o), \forall g_1,g_2\in G$. Denote the
ball of radius $n$ by
$$B(n):=\{g\in G: d_G(1, g)\le n\}.$$
A notable result by Gromov \cite{Gromov81} states that virtually nilpotent groups are characterized by the polynomial growth of $\sharp B(n)$ in Cayley graphs, whereas most groups have exponential growth. The \textit{growth rate} of a subset $A\subset G$ is defined as
\begin{equation}\label{criticalexpo}
\omega(A) = \limsup\limits_{n \to \infty} \frac{\log \sharp(B(n)\cap A)}{n},
\end{equation}
which is independent of the choice of basepoint $o \in \mathrm X$.

Recently, Dahmani, Futer, and Wise\cite{DFW19} showed that for a non-elementary hyperbolic group $G$ and an infinite-index quasi-convex subgroup $H$, their growth rates have strictly gap $\omega(H)<\omega(G)$. They also estabilished the same gap results for relatively hyperbolic groups and infinite-index relatively quasi-convex subgroups, as well as for a group acts geometrically on a CAT(0) cube complex and every subgroup stabilising an essential hyperplane. 
By Legaspi \cite{Legaspi24}, the growth rate gap also exists for a proper group action with a strongly contracting element and an infinite-index quasi-convex subgroup, as well as for an hierarchically hyperbolic group with a Morse element and an infinite-index Morse subgroup.
For a group $G$ with the Morse local-to-global property and an infinite-index stable subgroup $H$, under the assumptions that $G$ is virtually torsion-free or that $H$ is residually finite, Cordes, Russell, Spriano, and Zalloum\cite{CRSZ22} proved that the growth rates also have a gap $\omega(H)<\omega(G)$. Moreover, they proposed an open problem \cite[Problem $5$]{CRSZ22} of whether the torsion-free or residually finite assumptions can be removed. The current paper aims to provide a positive answer to their question by employing a distinct approach rather than the automatic structures for Morse geodesic words.
%to prove growth rate results with no additional assumptions. The virtually torsion-free or residually finite assumptions can be removed. These answer the Problem $5$ in their paper \cite{CRSZ22}.
As usual, a group $G$ is called acts \textit{geometrically} on $X$ if $G$ acts properly and cocompactly on $X$.

\begin{Theorem}\label{main thm}
Suppose that a non-elementary group $G$ acts geometrically on a geodesic metric space $(X,d)$ with the Morse local-to-global property. Assume that $H<G$ is a stable subgroup of infinite index. If $X$ has non-empty Morse boundary, then the growth rate has a gap:
$$\omega(G)>\omega(H).$$
\end{Theorem}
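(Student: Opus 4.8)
The plan is to bound $\omega(H)$ strictly below $\omega(G)$ by exploiting the dynamics of $G$ on its Morse boundary $\partial_* X$, rather than by counting Morse geodesic words as in \cite{CRSZ22}. Since the action is geometric, $G$ with a word metric is quasi-isometric to $X$ and all growth rates may be computed with $d_G$; in particular $\omega(H)\le \omega(G)$ is automatic, so only strictness is at issue. The two structural inputs I would lean on are: (i) a stable subgroup $H$ has a \emph{compact} limit set $\Lambda H\subset \partial_* X$ whose Hausdorff-type dimension records $\omega(H)$; and (ii) a non-elementary geometric action on a space with non-empty Morse boundary supplies Morse (contracting) elements, which will play the role of loxodromic isometries in a Patterson--Sullivan argument.

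First I would produce a Morse element transverse to $H$. Because $[G:H]=\infty$ and $H$ is stable, its limit set $\Lambda H$ is a proper, closed, $H$-invariant subset of $\Lambda G$. Using that $G$ has Morse elements whose fixed-point pairs are dense enough in $\partial_* X$, I would select a Morse element $g$ with $\{g^{+\infty},g^{-\infty}\}\cap \Lambda H=\emptyset$. Combined with the contraction of $g$ and the compactness of $\Lambda H$, this yields the key geometric statement: the nearest-point projection of the quasi-axis of $g$ onto a Morse neighbourhood of $H$ is uniformly bounded, so that the translates $g^n\Lambda H$ become pairwise almost disjoint as $|n|\to\infty$. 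This transversality is precisely where the infinite-index and stability hypotheses are consumed, and crucially it is insensitive to torsion, which is why the residually finite / torsion-free assumptions of \cite{CRSZ22} become unnecessary.

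Second I would construct a Patterson--Sullivan density $\{\mu_x\}$ for $G$ of dimension $s=\omega(G)$ on $\partial_* X$, deriving a shadow lemma from the contraction of $g$ (compare the machinery for actions with contracting elements underlying \cite{Legaspi24}); here the Morse local-to-global property is the essential substitute for global hyperbolicity, upgrading local Morse fellow-travelling into the global Gromov-product estimates the shadow lemma requires. Granting this, I would argue by contradiction: if $\omega(H)=\omega(G)$, then the $H$-subseries also has critical exponent $s$, and in the divergence-type case this forces $\mu$ to assign positive mass to the compact set $\Lambda H$ (the convergence-type case is the easier one and would be handled separately). Applying the transverse element $g$ together with conformality of the density then produces infinitely many translates $g^n\Lambda H$ of equal positive mass that are essentially disjoint inside the finite-mass boundary, which is absurd; hence $\omega(H)<\omega(G)$.

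The main obstacle is the analytic construction in the third step: Morse boundaries are in general neither locally compact nor visual, so the classical Patterson--Sullivan formalism does not transfer verbatim, and the shadow lemma must be re-derived from the Morse local-to-global property in tandem with the contraction of $g$. A secondary technical point is ensuring that the almost-disjoint translates are genuinely $\mu$-almost disjoint; this again reduces to the bounded-projection estimate of the second step, so the geometric transversality lemma is really the hinge on which both the qualitative existence of a good Morse element and the quantitative measure-disjointness turn.
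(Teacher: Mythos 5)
Your first two steps coincide with the paper's own (its Lemma \ref{MorseExists} produces exactly your Morse element $g$ with $Fix_{\partial_M X}(g)\cap\Lambda_M H=\emptyset$, and its Lemma \ref{lem:BoundedProjection} is your bounded-projection statement), but your third step contains a genuine gap, and it is the one you yourself flag without resolving: the Patterson--Sullivan construction does not go through on the Morse boundary in this generality. A density of dimension $\omega(G)$ is built as a weak-$*$ limit of finite measures supported on orbit points, which requires a compact (or at least locally compact) space in which the orbit accumulates; but by Lemma \ref{lem:HyperbolicCompact}, $\partial_M X$ is compact only when $X$ is hyperbolic, and in the non-hyperbolic case it is a direct limit of compacta that is neither locally compact nor a compactification of $X$, so there is no place to take the limit. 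Retreating to the horofunction boundary, as in the contracting-element literature you invoke (e.g.\ \cite{Legaspi24}), does not repair this: the shadow lemmas there are derived from \emph{strongly contracting} elements, whereas a Morse element of a Morse local-to-global group is in general only sublinearly contracting, and the Morse property alone gives no uniform control on projections of metric balls, which is precisely what a shadow lemma needs. Moreover, even granting a dimension-$\omega(G)$ density on some compactification, nothing guarantees it charges the Morse directions at all, so the contradiction scheme ($\mu(\Lambda_M H)>0$ versus essentially disjoint translates $g^n\Lambda_M H$) has no foundation. In short, your ``main obstacle'' is not a technical point to be checked but the missing core of the argument.

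It is worth noting that the geometric input you have already assembled suffices for a purely counting argument, which is the route the paper takes. From the bounded projections, alternating concatenations $[o,h_1o]\cdot h_1[o,go]\cdot h_1g[o,h_2o]\cdots$ are local Morse quasi-geodesics, and the Morse local-to-global property upgrades them to global Morse quasi-geodesics -- this is the only place the MLTG hypothesis enters; hence, after replacing $g$ by a large power and $H$ by a separated net $H'\subseteq H$ with $\omega(H')=\omega(H)$, the evaluation map $\mathbb{W}(H',\{g\})\to G$ is injective (Lemma \ref{lem:Injective}). Yang's criterion (Lemma \ref{degrowth}) then makes the Poincar\'e series of $H'$ \emph{converge} at $\omega(G)$, while the Dal'Bo--Peign\'e--Picaud--Sambusetti argument (Lemma \ref{lem:Divergence}) makes the Poincar\'e series of the quasi-convex subgroup $H$ \emph{diverge} at $\omega(H)$; comparing the two forces $\omega(H)<\omega(G)$. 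The free-product embedding plus this convergence/divergence dichotomy replaces the conformal density, the shadow lemma, and the measure-disjointness argument, which is exactly what allows the proof to work without compactness, local compactness, or strong contraction.
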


One can easily observe that stable subgroups are always hyperbolic, so for an infinite stable subgroup, there always exists a Morse element, and hence the Morse boundary is non-empty. In this regard, we add the condition that $X$ has non-empty Morse boundary just to avoid some trivial cases.
%In this regard, the condition that $X$ has non-empty Morse boundary seems to be redundant.

Russell, Spriano, and Tran \cite{RST22} obtained the following various groups or spaces admit the Morse local-to-global property, and hence they have the growth rate gap.

\begin{Corollary}
Suppose that $G$ is one of the following non-elementary group acting on its Cayley graph.
\begin{enumerate}
\item The mapping class group of an oriented finite type surface.
\item A graph product of hyperbolic groups.
\item The fundamental group of any closed 3-manifold without Nil or Sol components.
\end{enumerate}
Or suppose that $G$ acts geometrically on a CAT(0) space $(X,d)$ with a Morse element. 
Assume that $H<G$ is a stable subgroup of infinite index. Then the growth rate has a gap:
$$\omega(G)>\omega(H).$$
\end{Corollary}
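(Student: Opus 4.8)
Because the listed groups and spaces all act geometrically on a space with the Morse local-to-global property and non-empty Morse boundary — the mapping class group, graph products of hyperbolic groups, and $3$-manifold groups without Nil or Sol components by \cite{RST22}, and the CAT(0) case by hypothesis (a Morse element exists in each case, forcing $\partial_* X\neq\emptyset$) — the Corollary is an immediate instance of Theorem~\ref{main thm}. I therefore sketch a plan for Theorem~\ref{main thm} itself, from which it follows.

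The strategy is to build inside $G$ a \emph{free-product-like} family $\mathcal S$ of elements whose growth rate already exceeds $\omega(H)$, using a single Morse element transverse to $H$ together with the Morse local-to-global property to control concatenations. This replaces the regular-language input of \cite{CRSZ22} and, crucially, never appeals to algebraic normal forms: the entire count is run through the orbit map and ping-pong, which is exactly why torsion in $H$ causes no trouble.

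First I would produce the transverse element. Since $\partial_* X\neq\emptyset$ and $G\curvearrowright X$ is geometric, $G$ contains Morse (contracting) elements; since $H$ is stable, its orbit $Ho$ is a stable subset whose limit set $\Lambda_* H$ is a proper closed subset of $\partial_* X$ (here infinite index and non-elementarity are used). Pick a Morse element $t\in G$ with $t^{\pm\infty}\notin\Lambda_*H$, so that the axis of $t$, and each $H$-translate of it, meets any bounded neighbourhood of $Ho$ in a set of uniformly bounded diameter. Replacing $t$ by a high power, I would then run ping-pong: for $h_i\in H$ the broken path $[o,h_0o]\ast(h_0\cdot\mathrm{axis}(t))\ast[\,\cdot\,,h_0th_1o]\ast\cdots\ast[\,\cdot\,,h_0t\cdots th_ro]$ is a \emph{local} Morse quasi-geodesic with gauge independent of the word: stability of $H$ makes the orbit pieces uniformly Morse, and transversality of $t$ gives definite geometry at the junctions. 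The Morse local-to-global property upgrades these to \emph{global} Morse quasi-geodesics with uniform constants, whence the word-to-element map is finite-to-one with a uniform bound (Morse quasi-geodesics sharing endpoints uniformly fellow-travel, so the alternating pattern is recoverable up to bounded ambiguity).

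Finally I would count. The decisive simplification is that the $d_G$-length of such an element is bounded above by the triangle inequality alone, $d(o,h_0t\cdots th_ro)\le\sum_i\|h_i\|+r\|t\|$, with multiplicative constant $1$ — exactly the control needed for exponential growth rates, so no distortion constant leaks into the final comparison. Writing $P_H(s)=\sum_{h\in H}e^{-s\|h\|}$, bounded multiplicity together with this length bound makes the Poincaré series $\sum_{g\in\mathcal S}e^{-s\|g\|}$ bounded below by a constant times $P_H(s)\big/\bigl(1-e^{-s\|t\|}P_H(s)\bigr)$. Since $H$ is an infinite hyperbolic group, $P_H(s)\to\infty$ as $s\downarrow\omega(H)$, so $e^{-s\|t\|}P_H(s)>1$ for some $s>\omega(H)$; at such $s$ the series for $\mathcal S$ diverges, giving $\omega(G)\ge\omega(\mathcal S)\ge s>\omega(H)$. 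I expect the geometric heart — and the main obstacle — to be the quantitative verification that the broken paths are uniform local Morse quasi-geodesics and that the resulting map is genuinely bounded-to-one; this is where stability and transversality must be combined with care, and it is precisely the step that replaces the torsion-free or residually-finite hypotheses of \cite{CRSZ22} by purely metric input.
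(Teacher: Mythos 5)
Your treatment of the Corollary itself is exactly the paper's: quote \cite{RST22} for the Morse local-to-global property of each listed group/space and invoke Theorem~\ref{main thm}. One caution on this reduction: your parenthetical ``a Morse element exists in each case'' is not automatic --- a graph product such as $\mathbb{Z}\times\mathbb{Z}$ is non-elementary but has no Morse element, and there the gap genuinely fails for $H=\{1\}$ (both growth rates vanish); the correct way to secure non-empty Morse boundary is the paper's remark following Theorem~\ref{main thm}, namely that an \emph{infinite} stable subgroup is hyperbolic, hence contains an infinite-order element, which is Morse in $G$ by stability. Your sketch of Theorem~\ref{main thm} then follows the paper's geometric skeleton: a Morse element transverse to $\Lambda_M H$ (Lemma~\ref{MorseExists}), and broken paths that are uniform local Morse quasi-geodesics upgraded to global ones by the Morse local-to-global property (the Claim inside Lemma~\ref{lem:Injective}). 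Where you genuinely differ is the counting endgame. The paper passes to an $L$-separated net $H'\subseteq H$ (Lemma~\ref{lem:SepNet}), proves the evaluation map $\mathbb{W}(H',\{g\})\to G$ is \emph{injective}, and plays two criteria against each other: Yang's convergence criterion (Lemma~\ref{degrowth}) forces $\mathcal{P}_{H'}$ to converge at $\omega(G)$, while the Dal'Bo--Peign\'e--Picaud--Sambusetti argument (Lemma~\ref{lem:Divergence}) forces it to diverge at $\omega(H)$, whence $\omega(H)<\omega(G)$. You instead lower-bound the Poincar\'e series of the image set by the geometric series $\sum_r \bigl(e^{-s\|t\|}P_H(s)\bigr)^r P_H(s)$ and let $s\downarrow\omega(H)$; this divergence-style endgame (in the spirit of Dahmani--Futer--Wise) is a legitimate alternative route.

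However, two steps of your endgame are gaps as written. First, you justify $P_H(s)\to\infty$ as $s\downarrow\omega(H)$ by ``$H$ is an infinite hyperbolic group.'' The series is taken in the orbit pseudo-metric $d_G$, not the word metric of $H$, and divergence at the critical exponent is \emph{not} a quasi-isometry invariant, so hyperbolicity of $H$ alone proves nothing here; this is precisely what the paper's Lemma~\ref{lem:Divergence} supplies (submultiplicativity of annuli via quasi-convexity of $Ho$), and you must invoke it or an equivalent (e.g.\ purely exponential growth for the cocompact $H$-action on the weak hull of $\Lambda_M H$). Second, your claim that the word-to-element map is finite-to-one ``with a uniform bound'' is exactly where torsion bites, and the fellow-travelling argument you gesture at does not deliver it: fellow-travelling pins each prefix of a word down only up to a bounded ambiguity, and these ambiguities compound over the $r$ syllables, so what this argument actually yields is multiplicity at most $C^{r}$, not a bound independent of $r$; in the presence of torsion a uniform bound is likely false. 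Fortunately an exponential multiplicity bound still suffices for your scheme --- replace the ratio by $e^{-s\|t\|}P_H(s)/C$, which still exceeds $1$ for $s$ near $\omega(H)$ --- but this has to be observed explicitly, since it is the step your whole approach leans on. The paper avoids the issue altogether: the $L$-separation of $H'$ is used to prove honest injectivity, after which Yang's lemma requires no multiplicity analysis at all.
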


In mapping class group, the stable subgroups are precisely the convex cocompact subgroups by Durham
and Taylor \cite{DT15}. While in right-angled Artin groups, the stable subgroups are just the purely loxodromic subgroups by Koberda, Mangahas, and Taylor \cite{KMT17}. For a group acts geometrically on a CAT(0) cube complex, our result implies that there is a growth rate gap with respect to an infinite-index stable subgroup, and such a gap also exists for an hyperplane stabilizer subgroup, hence Cordes, Russell, Spriano, and Zalloum \cite[Problem $7$]{CRSZ22} asked when an hyperplane stabilizer is a stable subgroup.

\paragraph{\textbf{The structure of this paper.}}
In Section \ref{Section2}, we give some notations and definitions. After giving a quick review of the Morse boundary, we recall some basic results of Poincar\'{e} series. In Section \ref{Section3}, we list several useful facts about the Morse boundary and show that there exists a Morse element whose fixed points is not contained in the limit set of the stable subgroup. In Section \ref{Section4}, we further obtain that there is a subset of the stable subgroup whose free product with a Morse element admits an embedding into the ambient group and complete the proof of Theorem \ref{main thm}.

\paragraph{\textbf{Acknowledgements.}}
The authors would like to thank Wen-Yuan Yang for fruitful discussion and helpful conversations. Han is supported by NSFC of China(grant No. 12401082). Liu is partially supported by NSFC (grant No.12301084) and Natural Science Foundation of Tianjin(grant No. 22JCQNJC01080).

\section{Preliminary}\label{Section2}

\subsection{Notations and conventions}
Let $(X, d)$ be a geodesic metric space. If all closed ball in $X$ is compact, we say $X$ is a proper geodesic space. The $r$-neighborhood of a subset $A\subset X$ is denoted by $\mathcal{N}_r(A)$.

\begin{Definition}[Hausdorff distance]
Let $X$ be a metric space. The Hausdorff distance between two subsets $A,B\subset X $ is defined by
$$\inf\{r| A\subset \mathcal{N}_r(B), B\subset \mathcal{N}_r(A)\}.$$
\end{Definition}

\begin{Definition}[quasi-isometry; quasi-geodesic]
A map $f: (X,d_X)\to (Y, d_Y)$ between metric spaces is called a $(K, \epsilon)$-quasi-isometric embedding if $K\geq 1,\epsilon\geq 0$, and for each $x, x'\in X$
$$\frac{1}{K}d_X(x,x')-\epsilon\leq d_Y(f(x), f(x'))\leq Kd_X(x, x')+\epsilon.$$
We say $f$ is a quasi-isometry if there exists a constant $C\geq 0$ such that $d_Y(y, f(X))\le C$ for every $y\in Y$. 
If $X$ is a segment of $\mathbb{R}$, then we call $f$ is a $(K, \epsilon)$-quasi-geodesic.
\end{Definition}

Let $(X, d)$ be a proper geodesic metric space. Given a point $x\in X$ and a closed subset $A\subset X$, the set of points $a\in A$ such that $d(x,a)=d(x,A)$ is denoted by $\pi_A(x)$. The \textit{projection} of a subset $Y\subset X$ to $A$ is $\pi_A(Y)\coloneqq\cup_{y\in Y}\pi_A(y)$.

For any $x,y\in X$, a choice of geodesic between them is denoted by $[x, y]$.
In Section \ref{Section4} we usually consider a rectifiable path $\alpha\subset X$ with arc-length parametrization, its length is denoted by $\ell(\alpha)$. Let $\alpha_-,\alpha_+$ be its initial and terminal points respectively. 

To simplify the calculation, in the subsequent discussion we will use the following equivalent definition of quasi-geodesics: a path $\alpha$ is called a $(\lambda,c)$-quasi-geodesic for $\lambda\geq 1, c\geq 0$ if $\ell(\beta)\leq\lambda d(\beta_-,\beta_+)+c$ for any rectifiable subpath $\beta\subseteq\alpha$.
%Let $I$ denote a closed interval of $\mathbb{R}$. Then a map $\alpha: I\rightarrow X$ is a $(\lambda,c)$-quasi-geodesic if $\alpha$ is a $(\lambda,c)$-quasi-isometric embedding:
%\[\frac{1}{\lambda}|s-t|-c\leq d(\alpha(s),\alpha(t))\leq\lambda|s-t|+c,~\forall s,t\in I.\]
%We usually use the image of $\alpha$ to denote the quasi-geodesic for convenience.

A quasi-geodesic $\alpha\subset X$ is called \textit{$M$-Morse} for a function $M:[1,+\infty)\times[0,+\infty)\rightarrow[0,+\infty)$ if any $(\lambda,c)$-quasi-geodesic with endpoints $p,q\in\alpha$
%$\alpha(t_1),\alpha(t_2)$ $(t_1<t_2)$
is contained in the $M(\lambda,c)$-neighborhood of the subpath $[p,q]_\alpha$ of $\alpha$.
%$[\alpha(t_1),\alpha(t_2)]_\alpha:=\alpha|_{[t_1,t_2]}$ of $\alpha$.
If a $(\lambda,c)$-quasi-geodesic $\alpha$ is $M$-Morse, then we call $\alpha$ is a \textit{$(M;\lambda,c)$-Morse quasi-geodesic}.
A path $\alpha\subseteq X$ is a \textit{$(L;M;\lambda,c)$-local Morse quasi-geodesic} if for any subpath
$[p,q]_\alpha\subseteq\alpha$ with length $\ell([p,q]_\alpha)\leq L$, we have
$[p,q]_\alpha$ is a $(M;\lambda,c)$-Morse quasi-geodesic.
The function $M$ is usually called the \textit{Morse gauge} of $\alpha$.

\begin{Definition}
We call a metric space $(X,d)$ is a \textit{Morse local-to-global space} if for any Morse gauge $M:[1,+\infty)\times[0,+\infty)\rightarrow[0,+\infty)$ and any constants $\lambda\geq1,c\geq0$,
there exists a Morse gauge $M'$ and constants $L\geq0,\lambda'\geq1,c'\geq0$ so that every $(L;M;\lambda,c)$-local Morse quasi-geodesic in $X$ is a global $(M';\lambda',c')$-Morse quasi-geodesic.
A group $G$ is called a \textit{Morse local-to-global group} if some (or equivalently, any) of its Cayley graph is a Morse local-to-global space.
\end{Definition}

The Morse local-to-global property of a metric space is invariant under quasi-isometry.

\begin{Definition}
Suppose that a group $G$ acts properly on $(X,d)$. A subgroup $H\leq G$ is \textit{stable} if for some basepoint $o\in X$, there exists Morse gauge $M$ and $D\geq0$, so that for any $h\in H$, the geodesic $[o,ho]$ is $M$-Morse which is contained in the $D$-neighborhood of $Ho$. An element $g\in G$ is \textit{Morse} if the subgroup $\langle g\rangle$ is stable.
\end{Definition}

For a hyperbolic space, the Gromov boundary is the collection of geodesic rays up to the equivalence that two rays have finite Hausdorff distance. The \textit{Morse boundary} $\partial_M X$ of a proper geodesic metric space $(X,d)$ is defined similarly as follows: for a fixed basepoint $o\in X$, it is the set of equivalent classes of all Morse geodesic rays based at $o$, where two rays are equivalent if their Hausdorff distance is finite.

Given a Morse gauge $N$, we consider the following subset of $\partial_M X$ which consists of all equivalent classes of geodesic rays with Morse gauge $N$:
\[\partial_M^N X=\{[\alpha]: \text{there exists an $N$-Morse geodesic ray $\beta\in[\alpha]$ with base point $o$}\}.\]
Here $[\alpha]$ is the equivalence class of the geodesic ray $\alpha$.
We topologize this set with the compact-open topology. 

Let $\mathcal{M}$ be the set of all Morse gauges. We put a natural partial ordering on $\mathcal{M}$ and define the Morse boundary of $X$ to be
$$\partial_M X=\displaystyle\lim_{\displaystyle\overrightarrow{\mathcal{M}}} \partial_M^N X$$
with the induced direct limit topology, that is, a set $V$ is open in $\partial_M X$ if and only if $V\cap \partial_M^NX$ is open for every Morse gauge $N$.

For more details and proofs, the reader can check them in \cite{Cor17}. The Morse boundary is basepoint independent and moreover it is a quasi-isometry invariant.

Suppose that a group $G$ acts properly on the proper geodesic metric space $(X,d)$ and $H\leq G$ is a subgroup of $G$. Cordes and Durham \cite[Definition 3.2]{CD19} introduced the limit set of the action $H$ on $X$, denoted by $\Lambda_MH$, as following 
\[\Lambda_MH=\{\xi\in \partial_M X|\exists \{h_n\}\subseteq H,\lim h_n=\xi\} \]
%\begin{align*}
%\Lambda_MH=\{\xi\in \partial_M X| &\exists ~\text{Morse gauge}~ N ~\text{and a sequence}~ \{h_n\}\subseteq H, \\ & \text{so that}~ [o, h_no] ~\text{is}~ N-\text{Morse and}~ [o, h_no]\rightarrow\xi\}
%\end{align*}
where we say a sequence $\{g_n\}\subseteq G$ converges to a point $\xi\in \partial_M X$, denoted by $\lim g_n=\xi$, if for a fixed basepoint $o\in X$, there exists a Morse gauge $N$, so that $[o, g_no]$ is $N$-Morse for each $n$ and the sequence $\{[o, g_no]\}$ converges uniformly on compact subsets to a geodesic ray $\alpha\in \partial_M^N X$ representing $\xi$. It is easy to verify that the convergence does not depend on the choice of basepoint.

For a Morse element $g\in G$, its limit set refers to the limit set of $\langle g\rangle$, which is exactly two point $\{g^+,g^-\}=:Fix_{\partial_MX}(g)$ with $\lim\limits_{n\rightarrow+\infty} g^n=g^+,\lim\limits_{n\rightarrow-\infty} g^n=g^-$. The Morse element acts on the Morse boundary with a weak north-south dynamics \cite[Corollary 6.9]{Liu21}.

\subsection{Growth gap criterion}
%{A critical gap criterion}

Suppose that a group $G$ acts properly on a metric space $(X, d)$. For a basepoint $o\in X$, recall that $d_G$ is the proper pseudo-metric on $G$ induced by the orbit $Go$.
For a subset $A\subseteq G$, the corresponding Poincar\'e series
$$\mathcal{P}_{A}(s)= \sum\limits_{g\in A} \exp(-s\cdot d_G(1,g)), \; s \ge 0$$
is clearly divergent for $s<\omega(A)$ and convergent for $s>\omega(A)$.
For the case $s=\omega(A)$,
Dal'Bo, Peign\'{e}, Picaud, and Sambusetti \cite{DPPS11} presented a useful approach to obtain the divergence of the Poincar\'e series in the critical exponent.

\begin{Lemma} \label{lem:Divergence}
Suppose that a group $G$ acts properly on a proper geodesic metric space $(X,d)$. Then for a quasi-convex subgroup $H\leq G$, the Poincar\'{e} series $\mathcal{P}_{H,d_G}(s)$ is divergent at $\omega_{d_G}(H)$.
\end{Lemma}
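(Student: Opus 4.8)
The plan is to derive divergence at the critical exponent from a \emph{two-sided purely exponential} estimate on the orbital counting function, in the spirit of Dal'Bo--Peign\'e--Picaud--Sambusetti \cite{DPPS11}. Write $\omega=\omega_{d_G}(H)$ and set $B_H(n)=\sharp(B(n)\cap H)$, so that $\omega=\limsup_n \tfrac1n\log B_H(n)$. The reduction is that the lemma follows once we establish the \emph{uniform} lower bound
\begin{equation}\label{eq:pure}
B_H(n)\ \ge\ c\,e^{\omega n}\qquad\text{for all large }n
\end{equation}
for some $c>0$. Indeed, grouping $\mathcal{P}_{H,d_G}(\omega)=\sum_{h\in H}e^{-\omega d_G(1,h)}$ into annuli with $a_n=B_H(n)-B_H(n-1)$ and summing by parts (Abel summation) gives, with $B_H(-1)=0$,
\begin{equation}\label{eq:abel}
\sum_{n\le N} a_n e^{-\omega n}\ =\ B_H(N)e^{-\omega N}+(1-e^{-\omega})\sum_{n\le N-1} B_H(n)e^{-\omega n}.
\end{equation}
By \eqref{eq:pure} each summand $B_H(n)e^{-\omega n}\ge c$, so the right-hand side of \eqref{eq:abel} is $\ge(1-e^{-\omega})\,c\,(N-1)\to\infty$, forcing $\mathcal{P}_{H,d_G}(\omega)=\infty$. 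This is exactly where the \emph{exact} exponent matters: a merely sub-critical bound $B_H(n)\gtrsim e^{(\omega-\epsilon)n}$ would make \eqref{eq:abel} converge, so the whole content is to capture $\omega$ with only a multiplicative loss.

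To obtain \eqref{eq:pure} I would prove that $B_H$ is \emph{sub-multiplicative up to bounded index shifts}: recalling that $H$ quasi-convex means the orbit $Ho$ is $C_0$-quasi-convex (every geodesic joining two points of $Ho$ lies in $\mathcal{N}_{C_0}(Ho)$), I claim there is a multiplicity constant $\kappa\ge1$ with
\begin{equation}\label{eq:submult}
B_H(m+n)\ \le\ \kappa\,B_H(m+C_0)\,B_H(n+C_0)\qquad(m,n\ge0).
\end{equation}
The mechanism is a quasi-convex decomposition. Given $h\in H$ with $d_G(1,h)\le m+n$, let $x$ be the point at arclength $\min\{m,d_G(1,h)\}$ on a geodesic $[o,ho]$. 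Since $o,ho\in Ho$, quasi-convexity gives $x\in\mathcal{N}_{C_0}(Ho)$, so there is $g\in H$ with $d(x,go)\le C_0$; the triangle inequality then yields $d_G(1,g)\le m+C_0$ and $d_G(1,g^{-1}h)=d(go,ho)\le n+C_0$. Hence $h\mapsto(g,g^{-1}h)$ sends $B(m+n)\cap H$ into $(B(m+C_0)\cap H)\times(B(n+C_0)\cap H)$. Properness supplies the multiplicity bound: if $go,g'o\in B(x,C_0)$ then $(g^{-1}g')o\in B(o,2C_0)$, and $\kappa:=\sharp\{f\in G:d(o,fo)\le 2C_0\}$ is finite and independent of $x$ (compactness of $B(o,2C_0)$ together with the proper action), so the map is at most $\kappa$-to-one, which is \eqref{eq:submult}.

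Finally, \eqref{eq:submult} makes $\log B_H(n)$ almost sub-additive, so a Fekete-type argument --- absorbing the bounded shifts $C_0$ using the monotonicity of $B_H$ --- shows that $\lim_n\tfrac1n\log B_H(n)$ exists, hence equals the $\limsup$ defining $\omega$, and produces the clean lower bound \eqref{eq:pure}. The main obstacle is precisely the estimate \eqref{eq:submult}: one must place the intermediate orbit point $g$ at the correct arclength along $[o,ho]$ (which is what quasi-convexity, as opposed to mere undistortion of $H$, provides) and convert the proper action into the \emph{uniform} multiplicity bound $\kappa$. Once these two geometric inputs are secured, the passage from purely exponential growth to divergence of the Poincar\'e series at $\omega$ is the soft Abel-summation step \eqref{eq:abel} above, and no cocompactness of the action is needed.
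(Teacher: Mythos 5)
Your proof is correct and follows essentially the same route as the paper: both derive a submultiplicative counting estimate from quasi-convexity (cutting a geodesic $[o,ho]$ and projecting the cut point back to the orbit $Ho$), apply a Fekete-type argument to get the purely exponential lower bound $\sharp\,\cdot\,\geq c\,e^{\omega n}$ at the exact critical exponent, and then conclude divergence by summation, exactly in the spirit of Dal'Bo--Peign\'e--Picaud--Sambusetti. The only cosmetic difference is that you work with ball counts $B_H(n)$ (with shifts by $C_0$ and an explicit Abel summation) where the paper works with annulus counts $\mathcal{A}(n,\Delta)$, and you write out the details the paper delegates to citations.
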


\begin{proof}
We first divide the orbit $Ho$ into a collection of annulus sets
\[\mathcal{A}(n,\Delta)=\{h\in H:|d_G(1,h)-n|\leq\Delta\},\]
for some $\Delta\geq0$.
Then we apply the same argument as in the proof of \cite[Proposition 4.1 (1)]{DPPS11}, where we observe that the quasi-convexity of $H$ is enough to obtain that
$\mathcal{A}(n+m,\Delta)\subseteq\mathcal{A}(n,\Delta)\cdot\mathcal{A}(m,\Delta)$
for an appropriate $\Delta\geq0$, and hence
\[\sharp\mathcal{A}(n+m,\Delta)\leq\sharp\mathcal{A}(n,\Delta)\cdot\sharp\mathcal{A}(m,\Delta).\]
Noticing further that the Poincar\'e series can be re-formed as
\[\mathcal{P}_{A}(s)=\sum\limits_{r\geq0}e^{-sr} \sharp\mathcal{A}(r,0)\leq\sum\limits_{r\geq0}e^{-sr} \sharp\mathcal{A}(r,\Delta).\]
Then by a result for a positive squence with submultiplicative property\cite{PS7276}, one can obtain that $\sharp\mathcal{A}(n,\Delta)\geq e^{\omega(H)n}$, and then an easy estimation tells us that $\mathcal{P}_{H,d_G}(s)$ is divergent at $\omega_{d_G}(H)$.
\end{proof}

For two subsets $A, B\subseteq G$, let $\mathbb{W}(A, B)=\{a_1b_1\cdots a_nb_n: a_i\in A,b_i\in B,0<n\in\mathbb{N}\}$ be the collection of all words with letters alternating in $A$ and $B$.
The following result by Yang is essentially used to estabilish the growth rate gap.

\begin{Lemma}\cite[Lemma 2.23]{Yang19}\label{degrowth}
Suppose that a group $G$ acts properly on $(X,d)$.
Assume that $A, B\subseteq G$ are two subsets so that the evaluation map $\iota:\mathbb{W}(A, B)\to G$ is injective. If $B$ is finite, then $\mathcal{P}_{A,d_G}(s)$ converges at $s=\omega(X)$ where $X:=\iota(\mathbb{W}(A,B))\subset G$ is the image.
%In particular,   we have the critical gap:$$\omega_{d_G}(X) > \omega_{d_G}(A)$$ provided that $\mathcal{P}{A, d_G}(s)$ is divergent at $s=\omega_{d_G}(A)$.
\end{Lemma}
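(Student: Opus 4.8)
The plan is to exploit the free-product-like combinatorics of $\mathbb{W}(A,B)$ to bound the Poincar\'e series of $X$ from below by a geometric series in $\mathcal{P}_A(s)\mathcal{P}_B(s)$, and then to let $s$ decrease to the critical exponent $\omega(X)$, where the finiteness of $B$ does the final work.

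First I would record the two structural inputs. Since $d_G$ is left-invariant it is subadditive, $d_G(1,g_1g_2)\le d_G(1,g_1)+d_G(1,g_2)$, so by induction
\[
d_G(1,a_1b_1\cdots a_nb_n)\le \sum_{i=1}^n d_G(1,a_i)+\sum_{i=1}^n d_G(1,b_i).
\]
Second, the injectivity of $\iota$ means that every element of $X$ is the image of exactly one alternating word, so that summing over $X$ is literally the same as summing over $\mathbb{W}(A,B)$:
\[
\mathcal{P}_X(s)=\sum_{x\in X}e^{-s\,d_G(1,x)}=\sum_{w\in\mathbb{W}(A,B)}e^{-s\,d_G(1,\iota(w))}.
\]
Combining these (using $s\ge 0$ so that the exponential is monotone), I expand the right-hand side over the length $n$ and over the letters $a_i\in A$, $b_i\in B$; the subadditivity estimate converts the exponential of a sum into a product of exponentials, which then factors the multiple sum:
\[
\mathcal{P}_X(s)\ \ge\ \sum_{n\ge 1}\left(\sum_{a\in A}e^{-s\,d_G(1,a)}\right)^{\!n}\left(\sum_{b\in B}e^{-s\,d_G(1,b)}\right)^{\!n}=\sum_{n\ge 1}\bigl(\mathcal{P}_A(s)\,\mathcal{P}_B(s)\bigr)^n.
\]
It is precisely injectivity that makes this inequality run in the useful direction, since it forbids distinct words from collapsing to the same element of $X$ and thereby shrinking the left-hand sum.

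Next I would pass to convergence. For every $s>\omega(X)$ the left-hand side is finite (the critical exponent of $\mathcal{P}_X$ is $\omega(X)$, as recalled after the definition of the Poincar\'e series), so the geometric series on the right must converge, forcing $\mathcal{P}_A(s)\,\mathcal{P}_B(s)<1$ for all $s>\omega(X)$. I then let $s\downarrow\omega(X)$: each term $e^{-s\,d_G(1,\cdot)}$ increases as $s$ decreases, so by monotone convergence $\mathcal{P}_A(s)\to\mathcal{P}_A(\omega(X))$ and $\mathcal{P}_B(s)\to\mathcal{P}_B(\omega(X))$, the limits being possibly infinite a priori, and the inequality persists in the limit to give $\mathcal{P}_A(\omega(X))\,\mathcal{P}_B(\omega(X))\le 1$. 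Since $B$ is finite (and nonempty), $\mathcal{P}_B(\omega(X))$ is a finite, strictly positive number, whence $\mathcal{P}_A(\omega(X))\le \mathcal{P}_B(\omega(X))^{-1}<\infty$; that is, $\mathcal{P}_{A,d_G}(s)$ converges at $s=\omega(X)$.

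The delicate step, and the one I would state most carefully, is this final passage to the limit, because it is the only place the hypothesis on $B$ enters: I need $\mathcal{P}_B(\omega(X))$ to be simultaneously \emph{finite}, so that the product inequality actually bounds $\mathcal{P}_A$ rather than being vacuous, and \emph{positive}, so that dividing by it is legitimate. Finiteness of $B$ delivers both at once (and this is exactly where the argument would break for infinite $B$). Everything else — the subadditivity of $d_G$, the factorization of the multiple sum, and the term-by-term monotone limit — is routine.
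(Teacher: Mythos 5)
Your proposal is correct. The paper gives no proof of this lemma at all (it is imported directly from Yang's Lemma 2.23), and your argument is the standard one underlying that citation: injectivity of $\iota$ identifies the sum over $X$ with the sum over alternating words, subadditivity of the left-invariant pseudo-metric $d_G$ factors that sum to give $\mathcal{P}_X(s)\ge\sum_{n\ge 1}\bigl(\mathcal{P}_A(s)\mathcal{P}_B(s)\bigr)^n$, convergence of $\mathcal{P}_X$ for $s>\omega(X)$ forces $\mathcal{P}_A(s)\mathcal{P}_B(s)<1$, and finiteness (and nonemptiness) of $B$ makes the monotone limit $s\downarrow\omega(X)$ yield a genuine finite bound on $\mathcal{P}_A(\omega(X))$. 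All the steps check out, including the two delicate points you flag (the direction of the inequality coming from injectivity, and the role of $\sharp B<\infty$ in keeping $\mathcal{P}_B(\omega(X))$ finite and positive), so there is nothing substantive to compare against.
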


%\begin{Remark}
%From now on, we shall always consider the metric $d_G$ as the pullback of the metric $d$ on $\mathrm Y$ via the proper action. Hence, the subindex $d_G$ is omitted for simplicity.
%\end{Remark}

\section{Properties of the Morse boundary}\label{Section3}

Throughout the paper, we assume that $(X,d)$ is a proper geodesic metric space with non-empty Morse boundary, and suppose that a group $G$ acts geometrically on $X$. We always fix a basepoint $o\in X$.
By the well-known Svarc-Milnor Lemma, the space $(X,d)$ is quasi-isometric to the Cayley graph of $G$. If a property is invariant under quasi-isometry, then we can say the group action admits such a property provided that the Cayley graph of the ambient group does so.

In a Morse local-to-global space, a result in \cite{RST22} tells us the existence of a Morse element in a group. Morse property and Morse local-to-global property are invariants under quasi-isometry.

\begin{Lemma}\label{lem:ExistMorseEle}\cite[Proposition 4.8 ]{RST22}
Let $(X,d)$ be a Morse local-to-global space with non-empty Morse boundary. If $G$ acts geometrically on $X$, then $G$ contains a Morse element.
\end{Lemma}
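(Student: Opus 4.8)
The plan is to exhibit an element $g\in G$ whose orbit line $\{g^no\}_{n\in\mathbb Z}$ is a bi-infinite Morse quasi-geodesic, so that $\langle g\rangle$ is stable and $g$ is Morse by definition. The guiding idea is to build a \emph{local} Morse quasi-geodesic out of $G$-translates of one fixed geodesic segment and then use the Morse local-to-global property to upgrade it to a \emph{global} Morse quasi-geodesic. Since $\partial_M X\neq\emptyset$, I fix an $N$-Morse geodesic ray $\gamma$ based at $o$ for some Morse gauge $N$. Cocompactness gives a constant $R\geq0$ with $X=\mathcal N_R(Go)$, so for a spacing parameter $T$ I can choose elements $g_i\in G$, $i\geq0$, with $d(g_io,\gamma(iT))\leq R$. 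The orbit points $p_i:=g_io$ then track $\gamma$ at regular intervals, and since $\gamma$ is $N$-Morse the map $i\mapsto p_i$ is a Morse quasi-geodesic whose gauge $N'$ and quasi-geodesic constants depend only on $N$, $R$, and $T$.

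To find a periodic pattern I consider the transition elements $t_i:=g_i^{-1}g_{i+1}$. By left-invariance $d_G(1,t_i)=d(p_i,p_{i+1})\leq 2R+T+O(1)$, so all $t_i$ lie in a fixed ball of $G$; as the action is proper and $X$ is proper, this ball is finite. Rather than pigeonholing on single transitions, I pigeonhole on long \emph{windows}: fixing a window size $W$, I record for each $i$ the block $(t_{i-W},\dots,t_{i+W})$, which takes only finitely many values, and extract indices $i<j$ whose blocks coincide. I then set $g:=g_i^{-1}g_j$. Because $G$ acts by isometries, the geodesic $[o,go]=g_i^{-1}[p_i,p_j]$ is a $G$-translate of a Morse segment fellow-travelling the subray $\gamma|_{[iT,jT]}$, and every translate $g^k[o,go]$ is the same Morse segment moved by an isometry, hence $N'$-Morse with identical constants.

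Next I assemble the bi-infinite concatenation $\Gamma:=\bigcup_{k\in\mathbb Z}g^k[o,go]$ and claim it is an $(L;M;\lambda,c)$-local Morse quasi-geodesic for suitable $L$, $M$, $\lambda$, $c$. The point of matching whole windows of transitions is that, over any subsegment of $\Gamma$ of length at most $L$ (with $W$ chosen so that $WT$ exceeds $L$), the periodic path reproduces, up to a $G$-translation and bounded error, a genuine subpath of the $\gamma$-tracking quasi-geodesic $i\mapsto p_i$; the agreement of the blocks at $i$ and $j$ is exactly what guarantees that consecutive translated segments abut in the same way the points $p_\ell$ follow one another along $\gamma$, with no new backtracking at the junctions. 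Hence every window of length $\leq L$ is Morse with the uniform gauge $N'$, so $\Gamma$ is a local Morse quasi-geodesic. Applying the Morse local-to-global property of $X$ yields a Morse gauge $M'$ and constants $\lambda',c'$ making $\Gamma$ a global $(M';\lambda',c')$-Morse quasi-geodesic. As $\Gamma$ is the orbit path of $\langle g\rangle$, this shows $\langle g\rangle$ is stable, i.e.\ $g$ is Morse.

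The step I expect to be the main obstacle is the verification that $\Gamma$ is a \emph{local} Morse quasi-geodesic with uniform data, and in particular the control of backtracking at the segment junctions. A single long jump $g=g_i^{-1}g_j$ repeated periodically need not continue to track $\gamma$ globally, and without care two consecutive translates could fold back near their common endpoint; this is precisely why the pigeonhole must be performed on long windows rather than on single transitions, so that a whole neighborhood of each junction in $\Gamma$ is forced to look like a Morse subpath of the $p_\ell$ sequence. Quantitatively, I would choose $T$ and $W$ large relative to $L$ and to the Morse data of $\gamma$, and use the Morse property of $\gamma$ to bound the relevant nearest-point projections, thereby keeping the local quasi-geodesic constants and the Morse gauge independent of the window and of the junction. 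Once this uniform local estimate is in place, the local-to-global upgrade is immediate.
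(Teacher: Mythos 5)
The paper offers no proof of this lemma at all --- it is quoted directly from \cite[Proposition 4.8]{RST22} --- so the only meaningful comparison is with that cited argument, and your proposal is essentially a reconstruction of it: track a Morse ray $\gamma$ by orbit points via cocompactness, pigeonhole on the (finitely many possible) transition elements to produce $g=g_i^{-1}g_j$, observe that the periodic concatenation $\Gamma=\bigcup_k g^k[o,go]$ is a local Morse quasi-geodesic, and upgrade it to a global Morse quasi-geodesic by the local-to-global property, which makes $\langle g\rangle$ stable. Your sketch is correct, and the mechanism you gesture at with window matching is exactly right: $t_{i+\ell}=t_{j+\ell}$ for $0\le \ell<m$ gives $g_i^{-1}g_{i+m}=g_j^{-1}g_{j+m}$, hence $g_jg_i^{-1}p_{i+m}=p_{j+m}$, so within a window of each junction $\Gamma$ fellow-travels the tracking sequence for (a translate of) $\gamma$ and inherits a Morse gauge and quasi-geodesic constants depending only on $N$ and $R$. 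The one point that must be arranged in the correct quantifier order --- which you do flag in your final paragraph --- is that these local constants are independent of $T$, $W$, and $L$: only because of this can one first obtain $L$ from the Morse local-to-global property applied to those constants, and afterwards choose $T>L+2R$ (so that a subpath of length at most $L$ crosses at most one junction) and $W$ with $WT>L$; choosing $T$ or $W$ before knowing $L$, with constants genuinely depending on them, would be circular. With that ordering made explicit, your argument closes and matches the cited proof.
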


A finitely generated group $G$ is called non-elementary if it is not virtually cyclic. The second author showed that if a non-elementary group has non-empty Morse boundary, then its Morse boundary is an infinite set \cite[Corollary 5.8]{Liu21} and the action on the Morse boundary is minimal.

\begin{Lemma} \label{lem:MinimalAction}\cite[Theorem 6.1, Corollary 6.3]{Liu21} 
Suppose that a non-elementary group $G$ acts geometrically on the space $(X,d)$ with non-empty Morse boundary. Then $G$ acts minimally on the Morse boundary $\partial_MX$. That is, for any $p\in \partial_MX$, the orbits $G\cdot\{p\}$ is dense in $\partial_MX$.

Moreover, if $G$ contains a Morse element, 
then the set of rational points $$\{g^+|\ g \mbox{ is a Morse element in } G\}$$ is dense in $\partial_MX$.
\end{Lemma}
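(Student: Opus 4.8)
The plan is to derive both assertions from the weak north--south dynamics of Morse elements together with a closing-lemma argument producing Morse elements whose attracting fixed points approximate any prescribed boundary point. The logical core is the density of the set of attracting fixed points; once this is available, minimality follows from a short dynamical argument.

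First I would reduce minimality to a density statement. Let $p\in\partial_MX$ and set $C=\overline{G\cdot p}$, a nonempty closed $G$-invariant subset; the goal is $C=\partial_MX$. Using that $\partial_MX$ is infinite and that the fixed-point sets of distinct Morse elements are generically distinct, I would first check that $C$ contains at least two points: if $C=\{x\}$ then $x$ is fixed by all of $G$, so $x$ would have to lie in the fixed set of every Morse element $hgh^{-1}$, which is impossible. Now fix any Morse element $g$ and choose $\eta\in C$ with $\eta\neq g^-$. The weak north--south dynamics \cite[Corollary 6.9]{Liu21} gives $g^n\eta\to g^+$ inside a fixed stratum $\partial_M^NX$; since $C$ is closed and $G$-invariant, $g^+\in C$, and symmetrically $g^-\in C$. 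Thus $C$ contains the attracting fixed point of every Morse element of $G$.

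Second, the main step is to show that the set of Morse fixed points is dense in $\partial_MX$, and here I would use cocompactness and the Morse local-to-global property as a closing lemma. Given $\xi\in\partial_M^NX$ represented by an $N$-Morse ray $\alpha$ based at $o$, a basic neighborhood of $\xi$ consists of those classes whose representatives $N$-fellow-travel $\alpha$ out to a large radius $R$. By cocompactness the ray $\alpha$ is coarsely shadowed by orbit points, so I may choose $t\in G$ with $to$ lying close to $\alpha$ at distance well beyond $R$. The bi-infinite path obtained by concatenating the $G$-translates of $[o,to]$ is then an $(L;M;\lambda,c)$-local Morse quasi-geodesic for a suitable local scale $L$, because each segment closely follows a long subray of the $N$-Morse ray $\alpha$. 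The Morse local-to-global property upgrades this to a genuine bi-infinite Morse quasi-geodesic, so $t$ is a Morse element whose axis fellow-travels $\alpha$ near $o$; consequently $t^+=\lim_k t^k o$ lies in the prescribed neighborhood of $\xi$.

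Combining the two steps, $C$ is a closed set containing the dense set of Morse fixed points, hence $C=\partial_MX$, which proves minimality. The moreover clause is then immediate: fixing one Morse element $g$, which exists by Lemma~\ref{lem:ExistMorseEle}, minimality makes $G\cdot g^+$ dense, and each translate $hg^+=(hgh^{-1})^+$ is the attracting fixed point of the Morse element $hgh^{-1}$, so the rational points are dense. I expect the closing-lemma step to be the main obstacle: since $\partial_MX$ is only a direct limit of compacta, one must keep uniform control of the Morse gauge $M$ of $[o,to]$ and of the local scale $L$ so that the Morse local-to-global property applies and the resulting fixed point genuinely lands in the same stratum and neighborhood as $\xi$.
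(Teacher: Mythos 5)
Your proof does not establish the lemma as stated, for two reasons, one structural and one technical. Structurally, the lemma assumes only that a non-elementary group acts geometrically on a proper space with non-empty Morse boundary: there is \emph{no} Morse local-to-global hypothesis and \emph{no} assumption that Morse elements exist (the ``moreover'' clause adds the Morse element as a separate hypothesis precisely because it need not hold in this generality). Your argument uses both: the north--south step needs at least one Morse element, and your closing-lemma step explicitly invokes the Morse local-to-global property. Note that the paper does not reprove this lemma at all --- it quotes \cite{Liu21}, where minimality (Theorem 6.1) is proved \emph{directly}, by a fellow-travelling argument that translates a fixed Morse ray along a ray to the target point using cocompactness, with no Morse element and no local-to-global hypothesis; the density of rational points (Corollary 6.3) is then deduced \emph{from} minimality by exactly the conjugation argument $hg^+=(hgh^{-1})^+$ in your last paragraph. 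So the correct logical order is the reverse of yours: minimality first, rational points second. Your inverted scheme would at best prove a weaker statement (sufficient, as it happens, for the applications in this paper, where the space is assumed Morse local-to-global in Lemma \ref{MorseExists}), but it is not a proof of the lemma.

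Even granting the extra hypotheses, the step you yourself flagged as the main obstacle genuinely fails as written. Choosing an arbitrary $t\in G$ with $to$ close to $\alpha$ far beyond $R$ does not make the concatenation $\bigcup_{k}t^{k}[o,to]$ a local Morse quasi-geodesic: the local-to-global property only upgrades local Morse quasi-geodesics to global ones, it does not supply the local condition, and the local condition is exactly what can fail at the junction points. Nothing prevents $t[o,to]$ from backtracking along $[o,to]$, i.e.\ nothing controls the Gromov product of $t^{-1}o$ and $to$ at $o$; in the extreme case $t$ may be a torsion element, in which case no power of $t$ is Morse and $t^+$ does not exist. Since the entire point of the paper is to remove torsion-freeness/residual-finiteness assumptions, this failure mode cannot be waved away; repairing it requires choosing the shadowing element with controlled projections, or combining it with an auxiliary Morse element, which is a substantively different argument. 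A smaller but real gap of the same kind: your claim that the closed invariant set $C$ cannot be a single point is asserted, not proved --- ruling out a global fixed point of a non-elementary group on $\partial_M X$ requires an argument (e.g.\ that two Morse elements sharing exactly one fixed point violate properness), not just the remark that fixed sets of conjugates are ``generically distinct.''
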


From the two lemmas above, one can get the following corollary easily, which was proved in \cite[Theorem 6.5]{CRSZ22} by regular languages.
\begin{Corollary}
Let $G$ be a Morse local-to-global group with non-empty Morse boundary. Then the set $\{g^+|\ g \mbox{ is a Morse element in } G\}$ is dense in $\partial_*G$.
\end{Corollary}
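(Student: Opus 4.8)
The plan is to derive the statement directly from the two preceding lemmas, so that essentially no new argument is needed beyond checking hypotheses and translating between the space $X$ and the group $G$. First I would unwind the definitions: since $G$ is a Morse local-to-global group, any Cayley graph of $G$ with respect to a finite generating set is a Morse local-to-global space, and $G$ acts geometrically on this Cayley graph. The Morse boundary $\partial_* G$ is by definition the Morse boundary of this Cayley graph, and it is non-empty by hypothesis. Hence Lemma \ref{lem:ExistMorseEle} applies and produces a Morse element $g\in G$. This step is the key input, because it verifies precisely the extra hypothesis ``$G$ contains a Morse element'' required to invoke the second half of Lemma \ref{lem:MinimalAction}.

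The main case is when $G$ is non-elementary. Here I would simply apply the ``Moreover'' clause of Lemma \ref{lem:MinimalAction} to the geometric action of $G$ on its Cayley graph: that Cayley graph has non-empty Morse boundary and $G$ contains a Morse element by the previous step, so the set of rational points $\{g^+\mid g \mbox{ a Morse element of } G\}$ is dense in the Morse boundary of the Cayley graph, which is $\partial_* G$. This is exactly the asserted conclusion.

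For completeness I would dispose of the elementary case, in which $G$ is virtually cyclic. Since the Morse boundary is assumed non-empty, $G$ is infinite, hence two-ended and quasi-isometric to $\mathbb{Z}$, so $\partial_* G$ consists of the two points $\{g^+,g^-\}$ associated to an infinite-order (necessarily Morse) element $g$. Both of these are attracting fixed points of Morse elements, namely of $g$ and of $g^{-1}$, so the set of rational points already exhausts $\partial_* G$ and density is immediate.

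I do not expect any genuine obstacle: the content is carried entirely by Lemmas \ref{lem:ExistMorseEle} and \ref{lem:MinimalAction}, and the corollary is in effect a repackaging of these two facts, replacing the regular-language argument of \cite{CRSZ22} by a boundary-dynamics one. The only points deserving mild care are the identification of $\partial_* G$ with the Morse boundary of a Cayley graph, justified by the quasi-isometry invariance of the Morse boundary recalled in Section \ref{Section2}, and the routine observation that the Morse element supplied by the first lemma is exactly what unlocks the ``rational points are dense'' conclusion of the second.
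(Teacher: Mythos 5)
Your proposal is correct and follows exactly the route the paper intends: the paper derives this corollary immediately from Lemma \ref{lem:ExistMorseEle} (existence of a Morse element) and the ``Moreover'' clause of Lemma \ref{lem:MinimalAction} (density of rational points), which is precisely your argument. Your extra care in disposing of the virtually cyclic case (which Lemma \ref{lem:MinimalAction} formally excludes) is a small but legitimate refinement that the paper glosses over.
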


The following lemma tells us the connection between fixed points of the Morse element and the limit set $\Lambda_M H$.
\begin{Lemma}\label{g1}
Let $(X,d)$ be a Morse local-to-global space with non-empty Morse boundary and $G$ acts geometrically on $X$. Let $H$ be an infinite index stable subgroup of $G$ and $g\in G$ be a Morse element. If $Fix_{\partial_M X}(g)\cap\Lambda_M H\neq \emptyset$, then $Fix_{\partial_M X}(g)\subset \Lambda_M H$.
\end{Lemma}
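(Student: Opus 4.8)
The plan is to prove something slightly stronger than asked: the hypothesis already forces a nontrivial power of $g$ to lie in $H$, after which both fixed points land in $\Lambda_M H$ for free. Replacing $g$ by $g^{-1}$ if necessary interchanges $g^+$ and $g^-$ while leaving $\langle g\rangle$, and hence the hypothesis, unchanged, so I may assume $g^+\in\Lambda_M H$.

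First I would trap the positive half of the axis of $g$ in a bounded neighborhood of the orbit $Ho$. Since $g^+\in\Lambda_M H$, there is a sequence $h_j\in H$ with $h_j o\to g^+$; by stability of $H$ each geodesic $[o,h_j o]$ is $M$-Morse and lies in $\mathcal N_D(Ho)$, so their limit, a geodesic ray $\rho=[o,g^+)$, is $M$-Morse and contained in $\mathcal N_{D+1}(Ho)$. On the other hand, because $g$ is a Morse element, the map $n\mapsto g^n o$ ($n\ge 0$) is a Morse quasi-geodesic ray with the same ideal endpoint $g^+$. Two Morse quasi-geodesic rays issuing from $o$ with a common endpoint at infinity have finite Hausdorff distance, controlled by their Morse gauges and quasi-geodesic constants; hence there is a constant $D'\ge 0$ with $d(g^n o,Ho)\le D'$ for all $n\ge 0$. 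Establishing this uniform neighborhood statement is where stability of $H$ is essential, and it is the step demanding the most care with the Morse gauges and the passage to the limit; I expect it to be the main obstacle.

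With the positive ray confined to $\mathcal N_{D'}(Ho)$, the heart of the argument is a pigeonhole. For each $n\ge 0$ choose $h_n\in H$ with $d(g^n o,h_n o)\le D'$; applying the isometry $g^{-n}$ gives $d(o,g^{-n}h_n o)\le D'$, so $g^{-n}h_n$ lies in the ball $B(D')$. Since the action is proper and $X$ is proper, $B(D')$ is a finite set, so there is a single element $v\in B(D')$ with $g^{-n}h_n=v$, equivalently $h_n=g^n v$, for all $n$ in an infinite set $S$. For any $n,n'\in S$ this yields $h_{n'}h_n^{-1}=g^{n'}v\,v^{-1}g^{-n}=g^{\,n'-n}\in H$, and since $n\neq n'$ we conclude that a nonzero power $g^m\in H$.

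Finally, once $g^m\in H$ with $m\neq 0$, the whole cyclic subgroup $\langle g^m\rangle$ lies in $H$. Letting $k\to\infty$, the elements $g^{-mk}\in H$ satisfy $g^{-mk}o\to g^-$ with a uniform Morse gauge (as $g$ is Morse), so $g^-\in\Lambda_M H$ directly from the definition of the limit set. Combined with $g^+\in\Lambda_M H$, this gives $Fix_{\partial_M X}(g)=\{g^+,g^-\}\subset\Lambda_M H$, as required.
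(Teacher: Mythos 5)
Your proof is correct and takes essentially the same route as the paper: use stability to place the points $g^n o$ uniformly close to the orbit $Ho$, then pigeonhole the elements $g^{-n}h_n$ in a finite ball (properness) to extract a nonzero power $g^m\in H$, whence both fixed points of $g$ lie in $\Lambda_M H$. The only difference is that you spell out the fellow-traveling argument for Morse rays with common endpoint, which the paper compresses into the phrase ``by the stable property of $H$ and $\langle g\rangle$.''
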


%\begin{Lemma}\label{g1}
%Let $G$ be a Morse local-to-global group with non-empty Morse boundary and $g$ be a Morse element of $G$. Let $H$ be an infinite index stable subgroup of $G$. If $Fix_{\partial_{*}G}(g)\cap\Lambda H\neq \emptyset$, then $Fix_{\partial_{*}G}(g)\subset \Lambda H$.
%\end{Lemma}

\begin{proof}
Without loss of generality, suppose that $g^+\in \Lambda_M H$. Then by the stable property of $H$ and $\langle g\rangle$, we can find a constant $C>0$ such that, there exists a sequence $\{h_n\}\subseteq H$ and $k_n\rightarrow+\infty$ satisfying $d(h_no, g^{k_n}o)<C$.
By the proper action of $G$, the ball $B(C)=\{g\in G:d(o,go)\leq C\}$ contains only finite elements of $G$. Hence there exist two distinct integers $m, n$ such that $h_{n}^{-1}g^{k_n}=h_{m}^{-1}g^{k_m}$.
Then it is easy to see that $h_mh_{n}^{-1}$ and $g$ have the same limit set on the Morse boundary $\partial_M X$. This implies $Fix_{\partial_M X}(g)\subset \Lambda_M H$.
\end{proof}

Cordes and Durham\cite{CD19} obtained that the hyperbolicity of a group can be characterized by the compactness of its Morse boundary. This is also showed in the second author paper\cite{Liu21}

\begin{Lemma} \label{lem:HyperbolicCompact} \cite[Corollary 1.17]{CD19}\cite[Theorem 5.7]{Liu21}
Suppose that a group $G$ acts geometrically on the space $(X,d)$ with non-empty Morse boundary. Then $X$ is hyperbolic if and only if the Morse boundary $\partial_M X$ is compact.
\end{Lemma}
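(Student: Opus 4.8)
The plan is to characterize compactness of $\partial_M X$ through the direct limit structure, and then to tie stabilization of that direct system to hyperbolicity. Concretely, I would prove the chain of equivalences: $X$ is hyperbolic $\Longleftrightarrow$ the direct system $\{\partial_M^N X\}$ stabilizes, i.e. $\partial_M X = \partial_M^{N_0} X$ for a single gauge $N_0$ $\Longleftrightarrow$ $\partial_M X$ is compact. The two easy links are that a single stratum is compact and that hyperbolicity forces a uniform gauge; the two substantive links are the topological implication ``compact $\Rightarrow$ stabilizes'' and the geometric implication ``stabilizes $\Rightarrow$ hyperbolic''. Throughout I would use the standing fact, due to Cordes \cite{Cor17}, that for a proper geodesic space each stratum $\partial_M^N X$ is compact and metrizable in the compact-open topology, and that the strata are nested and $T_1$.

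For the forward implication, suppose $X$ is $\delta$-hyperbolic. By the Morse lemma for hyperbolic spaces there is a single Morse gauge $M_\delta$, depending only on $\delta$, such that every geodesic is $M_\delta$-Morse; hence every Morse geodesic ray already lies in $\partial_M^{M_\delta} X$, the direct system collapses, and $\partial_M X = \partial_M^{M_\delta} X$. This stratum coincides with the usual Gromov boundary $\partial X$, which is compact since $X$ is proper and hyperbolic, so $\partial_M X$ is compact.

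For the converse I would argue by contrapositive through a purely topological lemma about the direct limit. First I reduce to a countable cofinal chain of gauges $N_1 \le N_2 \le \cdots$, so that $\partial_M X = \bigcup_k \partial_M^{N_k} X$ is the direct limit of a nested sequence of compact strata with closed inclusions. I claim such a direct limit is compact only if the chain stabilizes. Indeed, if it did not stabilize one could pick $\xi_k \in \partial_M^{N_k} X \setminus \partial_M^{N_{k-1}} X$; since each stratum is $T_1$, the set $\{\xi_k\}$ meets every stratum in a finite, hence closed, subset, so $\{\xi_k\}$ and each of its subsets is closed in the direct limit topology, exhibiting an infinite closed discrete subset and contradicting compactness. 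Thus compactness of $\partial_M X$ forces a single gauge $N_0$ with $\partial_M X = \partial_M^{N_0} X$, and fellow-traveling then makes every Morse geodesic ray based at $o$ uniformly $N_0'$-Morse for a controlled $N_0'$.

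The main obstacle is the remaining implication: a uniform Morse gauge on the boundary rays should force $X$ itself to be hyperbolic, equivalently, if $X$ is \emph{not} hyperbolic then there must exist Morse rays of unboundedly growing optimal gauge (so the strata never stabilize). The difficulty is that a compact boundary only controls the \emph{Morse} rays, whereas non-hyperbolicity is witnessed by the failure of thin triangles, a priori among rays that are not Morse at all; I must rule out the possibility that all Morse rays stay uniformly tame while non-Morse geodesics carry the non-hyperbolicity. Here I would exploit the geometric action of $G$: cocompactness lets me translate any geodesic segment into bounded position near $o$ and extend it (via Arzel\`a--Ascoli in the proper space) to a ray, while the existence of a Morse element (Lemma \ref{lem:ExistMorseEle}) together with the minimality of the action and density of the rational points on $\partial_M X$ (Lemma \ref{lem:MinimalAction}) guarantee that Morse directions are dense enough to probe every region of $X$. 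The plan is to use these ingredients to build, out of arbitrarily non-thin triangles in a non-hyperbolic $X$, genuinely Morse rays whose optimal gauge is forced to diverge, so that the chain of strata cannot stabilize and, by the topological lemma above, $\partial_M X$ is not compact. This geometric upgrade from boundary control to global hyperbolicity is where the real work lies, and I expect it to be the hardest step.
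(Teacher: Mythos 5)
This lemma is not proved in the paper at all; it is quoted from \cite{CD19} and \cite{Liu21}, so your proposal has to stand on its own, and it does not. Your forward direction (hyperbolic $\Rightarrow$ a single gauge $\Rightarrow$ compact) is correct and complete, but the converse contains two genuine gaps. The first is the reduction to a countable cofinal chain of gauges: the poset of Morse gauges has no countable cofinal subset (given gauges $N_1,N_2,\dots$, the gauge $N(\lambda,c)=\max_{k\le\lceil\lambda+c\rceil}N_k(\lambda,c)+1$ is dominated by none of them), and you give no reason why the gauges actually realized by rays in $X$ should be dominated by countably many --- that is essentially the statement you are trying to prove. Worse, once countability is lost, your topological lemma is simply false. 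Let $D$ be an uncountable discrete set, $Y=D\cup\{\infty\}$ its one-point compactification, and take as strata $Y_A=A\cup\{\infty\}$ for countable $A\subseteq D$: this is a directed system of compact metrizable spaces with closed inclusions covering $Y$, and the direct limit topology coincides with the topology of $Y$ (in both, a set is closed iff it is finite or contains $\infty$), so the direct limit is compact while no stratum equals $Y$. Your argument via an infinite closed discrete set $\{\xi_k\}$ breaks in exactly this situation: an infinite subset of $D$ meets the stratum it spans in a non-closed set. So ``compact $\Rightarrow$ the strata stabilize'' is not soft topology; in \cite{CD19} and \cite{Liu21} the corresponding step (a compact subset of $\partial_M X$ lies in a single stratum) is proved geometrically, using properness of $X$ and Arzel\`a--Ascoli on geodesic rays.

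The second and larger gap is the implication you yourself flag as ``where the real work lies'': that a uniform gauge on Morse rays, together with a geometric action and non-empty boundary, forces hyperbolicity of all of $X$. You never prove this; what you offer is a restatement of the goal (manufacture Morse rays of diverging gauge out of fat triangles), not an argument, and it is the substantive half of the cited theorem. Moreover, your plan invokes Lemma \ref{lem:ExistMorseEle}, which assumes the Morse local-to-global property --- a hypothesis absent from the statement being proved, which concerns arbitrary proper geodesic spaces with geometric actions --- and Lemma \ref{lem:MinimalAction} additionally assumes $G$ is non-elementary, so neither is available as stated. Finally, note that any correct proof must use non-emptiness of $\partial_M X$ in an essential way, since $\mathbb{R}^2$ admits a geometric action and has empty (hence compact) Morse boundary without being hyperbolic; your sketch never locates where this hypothesis enters. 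As written, the converse direction is a plan, not a proof.
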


The following observation tells us that one can always find a Morse element such that its fixed points miss the limit set $\Lambda_M H$.
This lemma will be used in the next section to construct the embedding free product of subsets. 

\begin{Lemma}\label{MorseExists}
Suppose that $(X,d)$ is a Morse local-to-global space with non-empty Morse boundary and $G$ acts geometrically on $X$. Then for any infinite index stable subgroup $H$ of $G$, there exists a Morse element $g\in G$ such that $Fix_{\partial_M X}(g)\cap \Lambda_M H=\emptyset$.
\end{Lemma}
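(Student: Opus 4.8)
The plan is to argue by contradiction, playing the density of Morse fixed points off against the ``smallness'' that stability forces on $\Lambda_M H$. So I would assume that no Morse element with the asserted property exists. Then every Morse element $g\in G$ satisfies $Fix_{\partial_M X}(g)\cap\Lambda_M H\neq\emptyset$, and Lemma \ref{g1} immediately upgrades this to $g^+\in Fix_{\partial_M X}(g)\subseteq\Lambda_M H$. Consequently the whole set of rational points $\{g^+ : g \text{ a Morse element}\}$ would be contained in $\Lambda_M H$; note that this set is nonempty since $G$ contains a Morse element by Lemma \ref{lem:ExistMorseEle}.

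The next step is to turn this containment into the equality $\Lambda_M H=\partial_M X$. By Lemma \ref{lem:MinimalAction} the rational points are dense in $\partial_M X$, so $\Lambda_M H$ would be dense; to conclude that it is everything I need $\Lambda_M H$ to be closed. Here I would use stability crucially: since every geodesic $[o,ho]$ is uniformly $M$-Morse for one fixed gauge, the entire limit set lies in a single stratum $\partial_M^N X$. Each such stratum is compact and the Morse boundary is Hausdorff (see \cite{Cor17,CD19}), and a short diagonal-sequence argument then shows $\Lambda_M H$ is closed inside the compact metrizable stratum $\partial_M^N X$, hence closed in $\partial_M X$. Combining density with closedness yields $\Lambda_M H=\partial_M X$.

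Finally I would extract a contradiction from the infinite-index hypothesis. Because $\Lambda_M H$ is trapped in the single compact stratum $\partial_M^N X$, the equality $\Lambda_M H=\partial_M X$ forces $\partial_M X$ to be compact, so $X$ is hyperbolic by Lemma \ref{lem:HyperbolicCompact}. In that case $G$ is a non-elementary hyperbolic group, the stable subgroup $H$ is quasi-convex, and $\Lambda_M H=\partial_M X=\partial G$ is the full Gromov boundary; but a quasi-convex subgroup with full limit set is necessarily of finite index, contradicting $[G:H]=\infty$. This contradiction yields the desired Morse element.

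The step I expect to be the main obstacle is the closedness of $\Lambda_M H$ in the direct-limit topology: $\partial_M X$ is in general neither compact nor metrizable, so one cannot argue closedness globally and must honestly use the stability gauge to confine $\Lambda_M H$ to one compact stratum before arguing there. The hyperbolic endgame, by contrast, is routine once one invokes the classical fact that an infinite-index quasi-convex subgroup of a hyperbolic group has limit set a proper (indeed nowhere dense) subset of the boundary.
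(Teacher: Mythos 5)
Your proposal is correct and follows essentially the same route as the paper's own proof: contradiction via Lemma \ref{g1}, density of rational points from Lemma \ref{lem:MinimalAction} combined with closedness of $\Lambda_M H$ to force $\Lambda_M H=\partial_M X$, compactness of $\Lambda_M H$ (from stability) plus Lemma \ref{lem:HyperbolicCompact} to conclude $X$ is hyperbolic, and the classical finite-index fact for quasi-convex subgroups with full limit set to reach the contradiction. Your stratum-based justification of closedness is in fact more detailed than the paper's, which simply asserts that $\Lambda_M H$ is closed and derives compactness from the hyperbolicity of $H$.
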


%\begin{Lemma}\label{MorseExists}
%Let $G$ be a Morse local-to-global group with non-empty Morse boundary. For any finitely generated set $S$ of $G$ and any infinite index stable subgroup $H$ of $G$. Then there exists a Morse element $g\in G$ such that $Fix_{\partial_M X}(g)\cap \Lambda_M H=\emptyset$.
%\end{Lemma}

\begin{proof}
We will show it by contradiction. First by Lemma \ref{lem:ExistMorseEle}, we know that $G$ contains Morse elements. Now suppose that $Fix_{\partial_M X}(g)\cap\Lambda_M H$ is a nonempty set for any Morse element $g\in G$. By Lemma \ref{g1}, we have $Fix_{\partial_M X}(g)=\{g^-, g^+\}\subset \Lambda_M H$ for all Morse element $g \in G$. Note that $\Lambda_M H$ is a closed subset in the Morse boundary $\partial_M X$. By Lemma \ref{lem:MinimalAction}, the action of $G$ on the Morse boundary $\partial_M X$ is minimal, which implies that, for any point $p\in\partial_M X$, the orbits $G\cdot\{p\}$ is dense in $\partial_M X$.
Hence in particular, for a fixed Morse element $f$, we have 
$$G\cdot(f^+)\subset \{g^+, g^-|\ g \mbox{ is a Morse element in } G\}\subset \Lambda_M H\subset \Lambda_M G=\partial_M X.$$
By taking the closure, this shows that $\Lambda_M H=\Lambda_M G$. Since $H$ is a stable subgroup of $G$, all $[o,ho]$ for $h\in H$ are uniformly Morse, hence $H$ is a hyperbolic group, and thus its Morse boundary $\Lambda_M H$, which is also its Gromov boundary, is compact. This means that the Morse boundary $\Lambda_M G$ of $G$ is compact. By Lemma \ref{lem:HyperbolicCompact}, we have that the group $G$ must be hyperbolic.

However, for a hyperbolic group $G$, if $H\leq G$ is stable, then $H$ is quasi-convex in $G$. Moreover, if their limit set coincides with each other, i.e., $\Lambda_M H=\Lambda_M G$, then $H$ acts cocompactly on the weak convex hull 
%(that is, the union of quasi-geodesics ending at the limit set) 
of the limit set of $H$ which is co-bounded in $G$. Thus $H$ is finite index in $G$. We get a contraction.

%However, for any stable subgroup $H$ of a hyperbolic group $G$ with the same limit set as $G$, we always have $[G : H]<\infty$. We get a contraction.
%On the other hand, we know that a stable subgroup $H$ in a hyperbolic group must be quasi-convex. If its limit set coincides with the Gromov boundary of $G$, then $H$ acts cocompactly on the weak convex hull (i.e.: the union of quasi-geodesics ending at the limit set)  of the limit set of $H$ which is co-bounded in the Cayley graph $G$. Thus  $H$ is of finite index.
\end{proof}

\section{Embedding free product of subsets} \label{Section4}

This section is devoted to prove the main theorem of the present paper. In the process,  we actually construct an embedding of a free product of a subset of the stable subgroup with a Morse element into the ambient Morse local-to-global group, and this will imply the result by the usual method on divergent Poincar\'{e} series.

A subset $A\subseteq G$ is called \textit{$R$-separated} if $d(a, a')\geq R$ for any two $a\ne a'\in A$. The following lemma is common in the literature, cf. \cite[Lemma 3.6]{PY19}.

\begin{Lemma}\label{lem:SepNet}
Suppose that a group $G$ acts properly on $(X,d)$. Then for any $R>0$, there exists $\theta=\theta(R)>0$ with the following property. Assume that $A\subseteq G$ is a subset, then there exists an $R$-separated
subset $A'\subseteq A$ so that $A\subseteq N_{R}(A')$ and $\sharp (A'\cap B(n))\geq \theta\cdot\sharp (A\cap B(n))$ for any $n\in\mathbb{R}$. In particular, $\omega(A')=\omega(A)$.
\end{Lemma}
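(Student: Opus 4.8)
The plan is to extract $A'$ by a greedy procedure that processes the elements of $A$ in order of increasing distance to the basepoint, so that every discarded element is represented by a retained element that is no farther from the identity. This ordering is precisely what upgrades the crude net estimate (which would only compare $A'\cap B(n)$ with $A\cap B(n+R)$) into the stated pointwise bound at the \emph{same} radius $n$.

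First I would record the only consequence of properness that is needed: since $G$ acts properly on the proper space $X$, for every $R>0$ the ball $B(R)=\{g\in G:d_G(1,g)\le R\}$ is finite; write $C_R:=\sharp B(R)$ and set $\theta:=1/C_R$. By left-invariance of $d_G$, every $d_G$-ball of radius $R$ centred at an arbitrary element of $G$ likewise has at most $C_R$ elements. In particular $G=\bigcup_n B(n)$ is countable and each intersection $A\cap B(n)$ is finite.

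Next I would enumerate $A$ as $a_1,a_2,\dots$ with $d_G(1,a_1)\le d_G(1,a_2)\le\cdots$, which is possible because each annulus $A\cap(B(n)\setminus B(n-1))$ is finite. I would then build $A'$ greedily: place $a_i$ into $A'$ if and only if $d_G(a_i,a')\ge R$ for every $a'$ already chosen with smaller index. By construction $A'$ is $R$-separated, and if $a_i$ is discarded then some previously chosen $a'\in A'$ satisfies $d_G(a_i,a')<R$, so $A\subseteq\mathcal{N}_R(A')$. The crucial point is that such a representative $a'$ was selected earlier, hence $d_G(1,a')\le d_G(1,a_i)$. Define $\phi:A\to A'$ by $\phi(a)=a$ when $a\in A'$ and $\phi(a)=a'$ otherwise; then $d_G(1,\phi(a))\le d_G(1,a)$ for all $a$, and each fibre $\phi^{-1}(a')$ is contained in the $d_G$-ball of radius $R$ about $a'$, so $\sharp\phi^{-1}(a')\le C_R$.

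Finally I would count. Since $d_G(1,\phi(a))\le d_G(1,a)$, the map $\phi$ sends $A\cap B(n)$ into $A'\cap B(n)$, and as each fibre has at most $C_R$ elements,
$$\sharp(A\cap B(n))\le\sum_{a'\in A'\cap B(n)}\sharp\phi^{-1}(a')\le C_R\cdot\sharp(A'\cap B(n)),$$
which is the desired inequality with $\theta=1/C_R$. The equality $\omega(A')=\omega(A)$ then follows at once: the inclusion $A'\subseteq A$ gives $\omega(A')\le\omega(A)$, while taking logarithms, dividing by $n$, and letting $n\to\infty$ in the displayed bound gives $\omega(A)\le\omega(A')$, the constant $C_R$ disappearing in the limit. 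The one step requiring care is the ordering: a plain maximal $R$-separated subset (obtained from Zorn's lemma) yields coverage but only the weaker estimate comparing radius $n$ with radius $n+R$; processing the elements by increasing norm is exactly what forces each representative to remain in the same ball $B(n)$, and this is where the finiteness of $B(n)$ furnished by properness is essential.
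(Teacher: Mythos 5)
Your proof is correct, and it is actually more careful than the paper's own argument. The paper simply takes a maximal $R$-separated subset $A'\subseteq A$ (via Zorn's lemma), notes that maximality gives $A\subseteq\mathcal{N}_R(A')$, bounds the number of orbit points in any ball of radius $R$ by some $N$ using properness, sets $\theta=1/N$, and asserts that the result follows. As you observe, that is precisely the step needing care: for an arbitrary maximal separated subset, an element $a\in A\cap B(n)$ is only guaranteed a representative $a'\in A'$ with $d_G(1,a')\le n+R$, so one only obtains $\sharp(A\cap B(n))\le N\cdot\sharp\bigl(A'\cap B(n+R)\bigr)$, and the same-radius inequality stated in the lemma can genuinely fail for a badly chosen maximal net: if $A=\{a,b\}$ with $d_G(1,a)=n$, $d_G(1,b)=n+R/2$ and $d_G(a,b)<R$, then $\{b\}$ is a maximal $R$-separated subset that misses $B(n)$ entirely. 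Your greedy construction, processing elements in order of increasing distance to the basepoint, forces each discarded element's representative to lie in the same ball $B(n)$, which yields the pointwise inequality exactly as stated; the fibre count of at most $\sharp B(R)$ by left-invariance and the passage to $\omega(A')=\omega(A)$ are then routine. It is worth noting that the weaker radius-$(n+R)$ estimate coming from the paper's construction still implies $\omega(A')=\omega(A)$, since the shift by $R$ disappears in the limit, and that equality (together with $H\subseteq N_L(H')$) is all that is used downstream in Lemma \ref{lem:Injective}; so the paper's applications are unaffected, but your argument is the one that proves the lemma in the precise pointwise form in which it is stated.
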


\begin{proof}
Consider the collection of all $R$-separated subsets in $A$ with the partial order by inclusion. Then we can choose a maximal $A'\subseteq A$ in this collection by the axiom of choice. Hence we have $A\subseteq N_R(A')$. By the proper action of $G$ on $(X,d)$, there exists $N\in\mathbb{N}$ so that there are at most $N$ elements of $Go$ belonging to a ball of radius $R$. Take $\theta=\frac{1}{N}$, and the result follows.
\end{proof}

In the following, We always assume that $(X,d)$ is a Morse local-to-global proper metric space with non-empty Morse boundary, and assume that a group $G$ acts properly on $(X,d)$.
Suppose that $g\in G$ is a Morse element, then we denote by $A(g)=\langle g\rangle o$ the quasi-axis of $g$.

\begin{Lemma} \label{lem:BoundedProjection}
Assume that $H<G$ is a stable subgroup of infinite index, and assume that $g\in G$ is a Morse element with $Fix_{\partial_{M}G}(g)\cap \Lambda H=\emptyset$. Then there exists $C_0>0$ so that
\[\mathrm{diam} \pi_{A(g)}(Ho)\leq C_0, \mathrm{diam} \pi_{Ho}(A(g))\leq C_0.\]
\end{Lemma}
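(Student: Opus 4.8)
The plan is to argue by contradiction, treating the two diameter bounds in parallel since both reduce to the same mechanism: an unbounded projection would force a sequence in one of the two sets to converge, in the Morse boundary, to a fixed point of $g$ that lies in $\Lambda_M H$, contradicting $Fix_{\partial_M X}(g)\cap\Lambda_M H=\emptyset$. Before starting I would replace the quasi-axis $A(g)=\langle g\rangle o$ by a genuine $M$-Morse geodesic line $\gamma$ within bounded Hausdorff distance of it, whose two ends represent $g^+$ and $g^-$; this changes all projections only by a bounded additive constant, so it suffices to bound the projections to and from $\gamma$. Likewise, stability of $H$ means $Ho$ is a uniformly Morse, quasi-convex set whose limit set is $\Lambda_M H$, and all geodesics $[o,ho]$ with $h\in H$ are uniformly $M_H$-Morse.

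The key tool I would establish first is a projection-to-fellow-travelling lemma for Morse geodesics. For an $N$-Morse geodesic $\sigma$ through $o$ and a point $x$, the nearest-point projection $p\in\pi_\sigma(x)$ is coarsely well defined, and the concatenation $[o,p]_\sigma\ast[p,x]$ is a $(\lambda,c)$-quasi-geodesic with $(\lambda,c)$ depending only on $N$. Applying the Morse property of $\sigma$ to this concatenation shows that the geodesic $[o,x]$ lies in a uniform neighborhood of $[o,p]_\sigma$; in particular the initial segment of $[o,x]$ fellow-travels the subsegment of $\sigma$ from $o$ to $p$ for a length comparable to $d(o,p)$. Thus a large projection $d(o,p)$ forces $[o,x]$ to track a long initial segment of $\sigma$.

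With this in hand I would finish as follows. Suppose $\mathrm{diam}\,\pi_{A(g)}(Ho)=\infty$. Since $\pi_\gamma(Ho)$ is a subset of the line $\gamma$ of infinite diameter, after passing to a subsequence there are $h_n\in H$ with $\pi_\gamma(h_no)$ escaping towards one end, say $g^+$, so that $d(o,\pi_\gamma(h_no))\to\infty$. By the fellow-travelling lemma the geodesics $[o,h_no]$ track longer and longer initial segments of the $M$-Morse ray along $\gamma$ from $o$ to $g^+$. These geodesics are uniformly $M_H$-Morse by stability of $H$, so $\{[o,h_no]\}$ converges uniformly on compacta to the ray representing $g^+$; hence $\lim h_no=g^+$, i.e. $g^+\in\Lambda_M H\cap Fix_{\partial_M X}(g)$, a contradiction. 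The bound on $\mathrm{diam}\,\pi_{Ho}(A(g))$ is symmetric: an unbounded projection yields axis points $g^{k_n}o$ whose projections to $Ho$ escape along $Ho$; applying the fellow-travelling lemma to the uniformly Morse set $Ho$ shows $[o,g^{k_n}o]$ tracks $Ho$ for longer and longer, so $g^{k_n}o$ converges to a point of $\Lambda_M H$. But $g^{k_n}o\to g^{+}$ or $g^{-}$ as $k_n\to\pm\infty$, again placing a point of $Fix_{\partial_M X}(g)$ in $\Lambda_M H$.

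I expect the main obstacle to be the projection-to-fellow-travelling lemma in a general geodesic space: Morse quasi-geodesics need not be strongly contracting, so I cannot simply invoke strong contraction. The content is to verify that the concatenation $[o,p]_\sigma\ast[p,x]$ is a uniform quasi-geodesic, equivalently that nearest-point projection does not backtrack, which is exactly where the Morse gauge $N$ (and only $N$) enters. Once this is known, the remaining convergence argument in the Morse boundary is routine, using the uniform Morse bounds supplied by the stability of $H$ and by the Morse element $g$.
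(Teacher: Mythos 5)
Your overall strategy is the same as the paper's: argue by contradiction, show that the concatenation of an initial segment of the axis with a geodesic meeting it at a nearest-point projection is a uniform quasi-geodesic, and then use a Morse gauge to force this configuration to degenerate, placing $g^{+}$ or $g^{-}$ in $\Lambda_M H$. One step, however, needs repair as written. The claim ``applying the Morse property of $\sigma$ to this concatenation shows that the geodesic $[o,x]$ lies in a uniform neighborhood of $[o,p]_\sigma$'' is not a legitimate application of the Morse property: the concatenation has one endpoint, $x$, off $\sigma$, whereas the gauge of $\sigma$ only controls quasi-geodesics whose endpoints both lie on $\sigma$; moreover, even for such paths the gauge places the quasi-geodesic near $\sigma$, not the geodesic $[o,x]$ near the concatenation, so the containment also points the wrong way for your purposes. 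The repair is available in both of your applications and is exactly the paper's move: the relevant geodesic $[o,x]$ is itself uniformly Morse --- $[o,h_no]$ by stability of $H$ in the first case, $[o,g^{k_n}o]$ by stability of $\langle g\rangle$ in the second --- and its endpoints coincide with those of the concatenation, so its Morse gauge puts the concatenation (in particular the long axis segment $[o,p_n]_\sigma$, respectively the long segment ending in $Ho$) inside a uniform neighborhood of $[o,h_no]\subseteq N_D(Ho)$, respectively of $[o,g^{k_n}o]\subseteq N_{D'}(A(g))$. From this the contradiction follows either in the paper's form (the axis lies in a bounded neighborhood of $Ho$) or in your form (convergence in the Morse boundary via Arzel\`a--Ascoli, which additionally needs the reverse containment --- standard, but it is an extra connectedness argument, not a direct consequence of the gauge).

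You also misplace where the Morse hypothesis genuinely enters. The point you single out as the main obstacle --- that $[o,p]_\sigma\ast[p,x]$ is a uniform quasi-geodesic --- is elementary and needs no Morse gauge at all: for $u\in[o,p]_\sigma$ and $v\in[p,x]$ one has $d(u,v)\geq d(v,\sigma)\geq d(p,v)$ and $d(u,v)\geq d(u,p)-d(p,v)$, whence $d(u,p)+d(p,v)\leq 3\,d(u,v)$; this is the same triangle-inequality computation as in the paper (there with additive constant $2D'$, since the projection is taken to the orbit $\langle g\rangle o$ rather than to a geodesic line). What truly requires the gauge is the next step, upgrading ``quasi-geodesic with the same endpoints'' to ``fellow-travels'', which fails in a general geodesic space without Morseness. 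With these corrections your argument closes and is essentially the proof given in the paper.
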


\begin{proof}
Suppose otherwise that $\mathrm{diam} \pi_{A(g)}(Ho)=\infty$, then noticing that $g^jo\in\pi_{A(g)}(Ho)$ if and only if $g^{-j}o\in\pi_{A(g)}(Ho)$, we can find a sequence $k_i\rightarrow+\infty$, so that $g^{k_i}o\in\pi_{A(g)}(h_io)$ for some $h_i\in H$. Since $\langle g\rangle$ is stable, there exists $D'\geq0$ so that $[o,g^{k_i}o]$ is contained in the $D'$-neighborhood of $\langle g\rangle o=A(g)$ for each $i$.
We claim that the concatenated paths $\gamma_i=[o,g^{k_i}o][g^{k_i}o,h_io]$ for $i\geq 1$ are $(3,2D')$-quasi-geodesics. To see this, we only need to consider the subpath $[p_i,q_i]_{\gamma_i}$ with $p_i\in[o,g^{k_i}o]$ and $q_i\in[g^{k_i}o,h_io]$.
Let $p_i'\in\pi_{[o,g^{k_i}o]}(q_i)$ be a shortest projection point of $q_i$ to $[o,g^{k_i}o]$. Then $d(p_i',g^{l_i}o)\leq D'$ for some $g^{l_i}\in\langle g\rangle$ by the stable property, and we have $d(q_i,g^{k_i}o)\leq d(q_i,g^{l_i}o)$ by noticing that $g^{k_i}o$ is a shortest projection point of $q_i$ to $A(g)$. Combing $d(q_i,p_i')\geq d(q_i,g^{l_i}o)-d(p_i',g^{l_i}o)\geq d(q_i,g^{k_i}o)-D'$, we have
\begin{align*}
\ell([p_i,q_i]_{\gamma_i})&=d(p_i,g^{k_i}o)+d(g^{k_i}o,q_i)\leq d(p_i,q_i)+d(q_i,g^{k_i}o)+d(p'_i,q_i)+D' \\
& \leq d(p_i,q_i)+2d(p'_i,q_i)+2D'\leq3d(p_i,q_i)+2D'.
\end{align*}
%\vspace{-1em}
\begin{figure}[htbp]
  \vspace{-2em}
  \centering
  \includegraphics[width=0.3\textwidth]{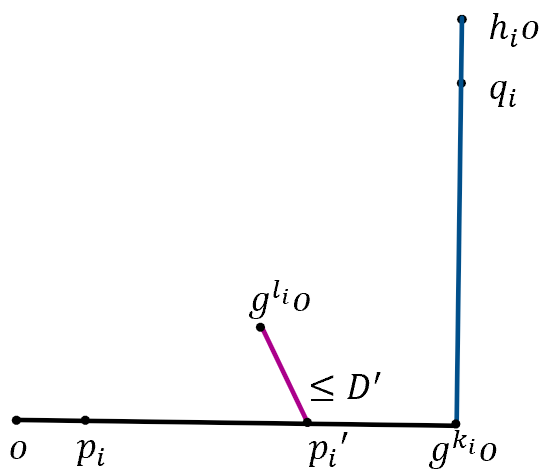}
  %\caption{Subdiagram}\label{figure:subdiagram}
  \vspace{-0.5em}
\end{figure}

Since $H$ is stable, there exists Morse gauge $M$ and $D\geq0$ so that $[o,h_io]$ is $M$-Morse which is contained in the $D$-neighborhood of $Ho$, and hence $\gamma_i$ is contained in the $D'':=(M(3,2D')+D)$-neighborhood of $Ho$.
In particular, $[o,g^{k_i}o]\subseteq N_{D''}(Ho)$ for each $i$, which implies that $A(g)\subseteq N_{D'''}(Ho)$ for some $D'''\geq0$, contradicting the assumption $Fix_{\partial_{M}G}(g)\cap \Lambda H=\emptyset$.

In the same way, we can obtain that $\mathrm{diam} \pi_{Ho}(A(g))$ is also bounded.
\end{proof}

\begin{Corollary}\label{cor:LineProjBod}
Under the same assumptions as Lemma \ref{lem:BoundedProjection}, the following projections are also uniformly bounded,
for any $h\in H$ and $k\in\mathbb{Z}$,
\[\mathrm{diam} \pi_{[o,g^ko]}([o,ho])\leq C_1, \mathrm{diam} \pi_{[o,ho]}([o,g^ko])\leq C_1.\]
\end{Corollary}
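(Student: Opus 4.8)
The plan is to transfer the two bounds of Lemma~\ref{lem:BoundedProjection}, which govern projections onto the entire quasi-axis $A(g)$ and the entire orbit $Ho$, down to the individual geodesic segments $[o,g^ko]$ and $[o,ho]$. The first ingredient I would record is that these segments uniformly shadow the larger sets: by stability of $\langle g\rangle$ there is $D'$ with $[o,g^ko]\subseteq\mathcal{N}_{D'}(A(g))$ for all $k$, and by stability of $H$ there is $D$ with $[o,ho]\subseteq\mathcal{N}_{D}(Ho)$ for all $h$. Since moreover all of these segments are uniformly $M$-Morse, the arcs of $A(g)$ and of $Ho$ that they shadow lie conversely in a uniform neighborhood of the segment, so each segment sits at uniformly bounded Hausdorff distance from the arc it shadows.

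The second ingredient I would invoke is the standard coarse projection calculus for uniformly Morse sets, treated as a black box: nearest-point projection onto an $M$-Morse geodesic has point-images of diameter bounded in terms of $M$ alone and is coarsely Lipschitz, and consequently projections of a fixed set onto two $M$-Morse sets lying at Hausdorff distance $\le\delta$ coarsely agree, with error controlled by $\delta$ and $M$. The same properties hold for projection onto the stable orbit $Ho$, with constants depending only on the stability gauge of $H$.

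For the first inequality I would argue as follows. Because $1\in H$ and $o\in A(g)$, the basepoint $o$ lies in both $Ho$ and $A(g)$ and projects to itself, so Lemma~\ref{lem:BoundedProjection} forces $\pi_{A(g)}(Ho)$ to lie within $C_0$ of $o$. As $[o,ho]\subseteq\mathcal{N}_D(Ho)$ and $\pi_{A(g)}$ is coarsely Lipschitz, the set $\pi_{A(g)}([o,ho])$ still concentrates within a uniform distance of $o$. Replacing $A(g)$ by the segment $[o,g^ko]$ costs only the bounded Hausdorff error from the first ingredient, and here the crucial point is that the ``capping'' of the segment at the endpoint $o$ does no harm precisely because the cluster sits at $o$ itself; this gives the bound on $\mathrm{diam}\,\pi_{[o,g^ko]}([o,ho])$. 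The second inequality is entirely symmetric: $\pi_{Ho}(A(g))$ concentrates within $C_0$ of $o$ by Lemma~\ref{lem:BoundedProjection} and the same basepoint observation, coarse Lipschitzness spreads the bound from $A(g)$ to $[o,g^ko]\subseteq\mathcal{N}_{D'}(A(g))$, and the Hausdorff comparison then replaces $Ho$ by the segment $[o,ho]$.

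I expect the main obstacle to be exactly this transfer step, namely confirming that projecting onto a capped geodesic segment rather than onto the full axis or orbit introduces only bounded error. This hinges on checking that all the relevant projections genuinely concentrate near the common endpoint $o$, rather than drifting toward the far endpoints $g^ko$ or $ho$ where truncation of the segment could otherwise create an unbounded discrepancy. Once that concentration is in hand, the remainder is the routine coarse-geometry bookkeeping of Morse projections.
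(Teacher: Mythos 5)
Your high-level picture is appealing, and one observation in it is genuinely correct and useful: since $o=1\cdot o\in Ho$ and $o\in A(g)$, Lemma \ref{lem:BoundedProjection} does force $\pi_{A(g)}(Ho)\subseteq B(o,C_0)$, so all the relevant projections should concentrate near the common basepoint $o$. The gap is in the ``standard coarse projection calculus'' you treat as a black box. For Morse quasi-geodesics in a general (even proper) geodesic metric space, nearest-point projection is \emph{not} coarsely Lipschitz, does \emph{not} have uniformly bounded point-images, and almost-nearest points need not lie near nearest points: by Arzhantseva--Cashen--Gruber--Hume, being Morse is equivalent only to \emph{sublinear} contraction, so all of these quantities can grow (sublinearly) with the distance to the set. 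Concretely, glue to a ray $\gamma$ an arc $\beta_n$ of length $n^2$ joining $\gamma(a_n)$ to $\gamma(a_n+n)$ for a sparse sequence $a_n$: the ray $\gamma$ is Morse, yet the midpoint of $\beta_n$ projects onto a set of diameter $n$, and two points near that midpoint at distance $2$ from each other have projections about $n$ apart. This simultaneously breaks your coarse Lipschitz step (spreading the bound from $Ho$ to $[o,ho]\subseteq\mathcal{N}_D(Ho)$), your Hausdorff-comparison step, and your ``capping'' step (which amounts to the false claim that almost-nearest points are uniformly close to nearest points). These facts do hold for strongly contracting sets (quasi-convex sets in hyperbolic spaces, Morse geodesics in CAT(0) spaces), but the theorem here is for arbitrary Morse local-to-global spaces, and MLTG is not known to upgrade Morse to strongly contracting; so each black-box step would require a proof that does not exist in the generality needed.

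The paper's proof avoids projection calculus entirely and instead re-runs the mechanism of Lemma \ref{lem:BoundedProjection}. Given $b\in[o,ho]$ and a nearest point $a\in\pi_{[o,g^ko]}(b)$, the concatenation $[o,a][a,b]$ is a $(3,0)$-quasi-geodesic purely by the defining property of nearest-point projection (no contraction input at all); appending a short segment $[b,h'o]$ with $h'\in H$, $d(b,h'o)\le D$, yields a uniform quasi-geodesic whose endpoints both lie in $Ho$, so the Morse gauge of the stable geodesic $[o,h'o]$ traps $a$, and hence a nearby axis point $g^lo$, in a uniform neighborhood of $Ho$. If the projections were unbounded over $h$ and $k$, arbitrarily long initial segments of the axis would lie uniformly close to $Ho$, contradicting $Fix_{\partial_M X}(g)\cap\Lambda_M H=\emptyset$ exactly as in Lemma \ref{lem:BoundedProjection}. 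Note that this is not a formal deduction from the $C_0$-bound of that lemma (which is what you attempted) but a repetition of its argument at the level of segments; the concatenation trick is precisely the correct substitute for the coarse Lipschitz property you wanted, converting Morseness of the \emph{target} geodesics supplied by stability into control on projections, rather than demanding contraction properties of the projection map itself.
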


\begin{proof}
Given $h\in H$ and $k\in\mathbb{Z}$, we take $b\in[o,ho]$, then we choose a $a\in\pi_{[o,g^ko]}(b)$. Notice that $a$ is a shortest projection point, then a same argument as Lemma \ref{lem:BoundedProjection} we obtain that $[o,a][a,b]$ is a $(3,0)$-quasi-geodesic. By the stable property of $H$, we have that $[o,a][a,b][b,h'o]$ is contained in the $M(3,4D)$-neighborhood of $Ho$ for some $h'\in H$ with $d(b,h'o)\leq D$, where $M$ is the Morse gauge and $D\geq0$.
Moreover, since $\langle g\rangle$ is stable, there exists $g^l\in\langle g\rangle$ with $d(g^lo,a)\leq D'$ for some $D'\geq0$. Therefore, $d(g^lo,Ho)\leq M(3,4D)+D'$, then a similar discussion as Lemma \ref{lem:BoundedProjection} completes the proof of the part $\mathrm{diam} \pi_{[o,g^ko]}([o,ho])\leq C_1$. The other part can be obtained in the same way.
\begin{figure}[htbp]
  \vspace{-1em}
  \centering
  \includegraphics[width=0.4\textwidth]{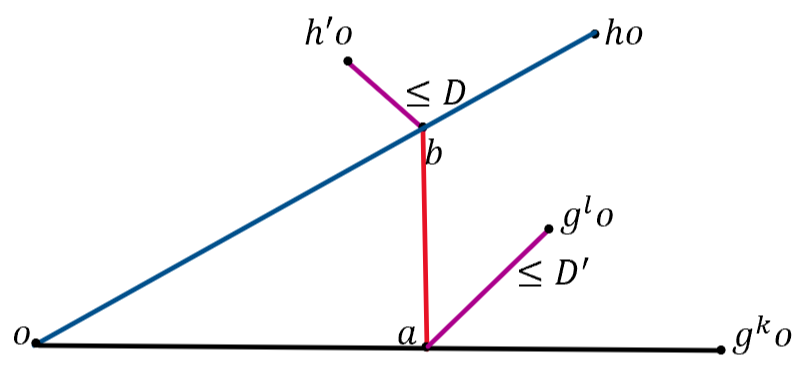}
  %\caption{Subdiagram}\label{figure:subdiagram}
  \vspace{-1em}
\end{figure}
%We first observe that the union $\cup_{h\in H}[o,ho]$ is contained in a $D$-neighborhood of $Ho$ for some $D\geq0$ by the stable property of $H$. Hence their projections to a set $A\subseteq X$ always have
%Suppose otherwise that there exists a sequence $k_i\rightarrow+\infty$ and a sequence $\{h_i\}\subseteq H$, so that $\lim\limits_{i\rightarrow+\infty}\mathrm{diam} \pi_{[o,g^{k_i}o]}(h_io)=+\infty$. We can take $a_i\in\pi_{[o,g^{k_i}o]}(h_io)$ with $\lim\limits_{i\rightarrow+\infty}d(o,a_i)=+\infty$. By the stable property of $\langle g\rangle$, all $[o,g^{k_i}o]$ are contained in a $D'$-neighborhood of $A(g)$ for some $D'\geq0$. Therefore, there exists $g^{l_i}\in\langle g\rangle$ so that $d(a_i,g^{l_i}o)\leq D'$ for each $i$.
\end{proof}

\begin{Lemma} \label{lem:Injective}
Suppose that $(X,d)$ is a Morse local-to-global metric space with non-empty Morse boundary, and suppose that a group $G$ acts properly on $(X,d)$. Assume that $H<G$ is a stable subgroup of infinite index.
Then there exists a Morse element $g\in G$ and
a constant $C\geq0$, as well as a subset $H'\subseteq H$, such that $H\subseteq N_C(H'),\omega(H')=\omega(H)$ and the evaluation map $\iota:\mathbb{W}(H',\{g\})\to G$ is injective.
\end{Lemma}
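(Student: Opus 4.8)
The plan is to build, for each alternating word $w=h_1gh_2g\cdots h_ng\in\mathbb{W}(H',\{g\})$, a concatenated path from $o$ to $\iota(w)o$ whose pieces are the orbit geodesics of the successive letters, to show via the Morse local-to-global property that this path is a uniform global Morse quasi-geodesic, and finally to read the word off from the path to obtain injectivity. First I would invoke Lemma \ref{MorseExists} to fix a Morse element $g_0\in G$ with $Fix_{\partial_M X}(g_0)\cap\Lambda_M H=\emptyset$; then by Lemma \ref{lem:BoundedProjection} and Corollary \ref{cor:LineProjBod} the projections between the quasi-axis $A(g_0)$ and $Ho$, and between the individual geodesics $[o,g_0^k o]$ and $[o,ho]$, are uniformly bounded by some $C_1$. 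I will eventually replace $g_0$ by a large power $g=g_0^N$: since $\langle g\rangle\le\langle g_0\rangle$ has the same fixed points, the disjointness from $\Lambda_M H$ and the projection bounds are preserved, while the segments $[o,g^k o]$ become as long as needed.

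Next I would apply Lemma \ref{lem:SepNet} to produce an $R$-separated subset $H'\subseteq H$ with $H\subseteq\mathcal{N}_R(H')$ and $\omega(H')=\omega(H)$, taking $C=R$; the separation constant $R$ will be fixed large at the very end. For a word $w$ as above, define $P_w$ to be the concatenation $[o,h_1o]\,h_1[o,go]\,h_1g[o,h_2o]\cdots$ running through the prefix orbit points, so that its pieces alternate between $H$-segments (translates of some $[o,ho]$) and $g$-segments (translates of $[o,go]$). By stability of $H$ and of $\langle g\rangle$ all pieces are uniformly Morse with a single gauge independent of $w$ and of $N$, and by Corollary \ref{cor:LineProjBod} the projection of each piece onto the adjacent one has diameter at most $C_1$.

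The heart of the argument is to show that $P_w$ is a uniform local Morse quasi-geodesic. The bounded-projection condition at each corner prevents backtracking, so a finite combination argument shows that any concatenation of a bounded number of consecutive pieces is an $(M;\lambda,c)$-Morse quasi-geodesic for constants $M,\lambda,c$ depending only on $C_1$ and the piece gauges, not on $N$. Feeding this $(M,\lambda,c)$ into the Morse local-to-global property yields a threshold $L$ and global constants $(M';\lambda',c')$; I then choose $N$ so large that every $g$-segment has length exceeding $L$. Since consecutive $g$-segments are separated by a single (possibly short) $H$-segment, any subpath of $P_w$ of length at most $L$ meets only a bounded number of pieces and is therefore an $(M;\lambda,c)$-Morse quasi-geodesic; hence $P_w$ is an $(L;M;\lambda,c)$-local Morse quasi-geodesic, and so a global $(M';\lambda',c')$-Morse quasi-geodesic with constants uniform in $w$.

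For injectivity, suppose $\iota(w)=\iota(w')=p$. Then $P_w$ and $P_{w'}$ are global Morse quasi-geodesics with the same endpoints $o$ and $p$, so both fellow-travel $[o,p]$, hence each other, within a constant $K=K(M',\lambda',c')$. The long $g$-segments appear as the ``straight'' portions of these quasi-geodesics near $[o,p]$; matching them up, using that they are long compared with $K$ and $C_1$, forces $w$ and $w'$ to have the same number of letters and forces corresponding prefix orbit points $\iota(h_1g\cdots h_i)o$ to lie within $2K$ of each other. Choosing $R>2K$ at the outset, the $R$-separation of $H'$ upgrades this coarse agreement to the exact equalities $h_i=h_i'$, whence $w=w'$. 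I expect the main obstacle to be precisely this local-to-global verification together with the matching step: one must use the projection bounds of Lemma \ref{lem:BoundedProjection} and Corollary \ref{cor:LineProjBod} to rule out cancellation at the corners where a short $H$-segment sits between two long $g$-segments, and then pair off the $g$-segments of the two paths unambiguously so that the $R$-separation pins down the $H'$-letters.
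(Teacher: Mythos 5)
Your construction phase follows the paper's proof closely: the same Morse element from Lemma \ref{MorseExists}, the same projection bounds $C_1$ from Lemma \ref{lem:BoundedProjection} and Corollary \ref{cor:LineProjBod}, the same separated net from Lemma \ref{lem:SepNet}, and the same concatenated word-paths fed through the Morse local-to-global property. The first problem is in your local-to-global verification. Because you allow the $H'$-letters to be short, a subpath of length at most $L$ can have the form (end of a $g$-segment)(entire short $H$-segment)(beginning of the next $g$-segment), and for this configuration the reason you give --- that bounded projections at the corners prevent backtracking --- says nothing: when the middle piece is shorter than $C_1$, the corner projection bounds are vacuous, and what actually rules out backtracking is that the two $g$-segments, being joined by a short element, run along a common translate of the quasi-axis, so one must invoke the quasi-geodesity of the orbit map $n \mapsto g_0^{n}o$ rather than Corollary \ref{cor:LineProjBod}. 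The paper never meets this case: the point of taking $H'$ to be $L$-separated (after discarding the finitely many elements of length less than $L$, which affects nothing) is that every piece of the path then has length at least $L$, so every subpath of length at most $L$ lies in at most two consecutive pieces, and only the two-piece Claim about $[ho,o][o,g^ko]$ is ever needed.

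The serious gap is your injectivity step. To ``pair off the $g$-segments unambiguously'' you need control on projections between pieces of the two different paths $P_w$ and $P_{w'}$ --- between distinct translates $uA(g)$ and $u'A(g)$, and between $uA(g)$ and a translate $v[o,h'o]$ with $v\neq u$. The lemmas you cite provide no such control: they bound only the specific pair $(A(g),Ho)$ and segments issuing from the common basepoint $o$, and for general translates these projections can genuinely be unbounded (by Lemma \ref{lem:MinimalAction} the orbit $G\cdot g^{+}$ is dense in $\partial_M X$, so some translates of $A(g)$ fellow-travel $Ho$ for arbitrarily long stretches). You yourself flag the matching as the main obstacle, but you supply no mechanism for it, so the proof is incomplete at its decisive point. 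The paper's endgame is different and avoids this entirely: injectivity reduces to nontriviality of word-paths. If $\iota(w)=\iota(w')$ with $w\neq w'$, cancel the common prefix and form the reduced difference word $V=g^{-1}h_n^{-1}g^{-1}\cdots g^{-1}(h_i^{-1}h_i')\,g\,h_{i+1}'g\cdots h_m'g$: its letters alternate between $g^{\pm1}$ and elements of $H$, every $H$-letter has length at least $L$ (for $h_i^{-1}h_i'$ this is exactly what the $L$-separation of $H'$ buys), and the Claim is stated for all $h\in H$ and all $k\in\mathbb{Z}$ precisely so that it applies to this path. Hence the path of $V$ is an $(L;M;3,2C_1)$-local, therefore global, Morse quasi-geodesic, giving $d(o,\iota(V)o)>0$ and so $\iota(V)\neq 1$, a contradiction. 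Replacing your fellow-traveling argument with this difference-word argument closes the gap using only the tools you already have.
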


\begin{proof}
By Lemma \ref{MorseExists}, there exists a Morse element $g$ such that $Fix(g)\cap \Lambda H=\emptyset$.
Let $M_1,M_2$ be the Morse gauges for $H$ and $\langle g\rangle$ respectively by the stable property. And let $M_0=\max\{M_1,M_2\}$.
We define a function
\begin{align*}
M:[1,+\infty)\times[0,+\infty)&\rightarrow[0,+\infty) \\
(\lambda,c)&\mapsto M_0(2\lambda+1,c)=\max\{M_1(2\lambda+1,c),M_2(2\lambda+1,c)\}
\end{align*}
\paragraph{\textbf{Claim:}}
For any $h\in H$ and $k\in\mathbb{Z}$, any subpath $\alpha$ of $[ho,o][o,g^ko]$ or $[g^ko,o][o,ho]$ is a $(M;3,2C_1)$-Morse quasi-geodesic, where $C_1$ is given by Corollary \ref{cor:LineProjBod}.

\begin{proof}[Proof of the claim]
Suppose that $\alpha\subseteq[ho,o][o,g^ko]$, the other case can be verified similarly.
We only need to consider that $o\in\alpha$. By taking a shortest projection point, we get that $[ho,o][o,g^ko]$ and thus $\alpha$ are $(3,2C_1)$-quasi-geodesic in a similar way as Lemma \ref{lem:BoundedProjection}.
Assume that $\beta$ is a $(\lambda,c)$-quasi-geodesic with endponits $\beta_-=p\in\alpha\cap[ho,o]$ and $\beta_+=q\in\alpha\cap[o,g^ko]$. Take a shortest projection point $a\in\pi_{\beta}(ho)$, then
\begin{align*}
\ell([ho,a][a,q]_{\beta})&=d(ho,a)+\ell([a,q]_{\beta})\leq d(ho,a)+\lambda d(a,q)+c \\
&\leq d(ho,a)+\lambda[d(a,ho)+d(ho,q)]+c\leq(2\lambda+1)d(ho,q)+c.
\end{align*}
Moreover, a same argument as above we obtain that $[ho,a][a,q]_{\beta}$ and $[p,a]_{\beta}[a,ho]$ are $(2\lambda+1,c)$-quasi-geodesics, and hence they are contained in a $M_0(2\lambda+1,c)$-neighborhood of $[ho,q]_{\alpha}$ and $[p,ho]_{\alpha}$ respectively. As a result, $\alpha$ is $M$-Morse, and the claim completes.
\end{proof}
Now for the Morse gauge $M$ and constants $3,2C_1$, the Morse local-to-global property tells us that there exists a Morse gauge $M'$ and constants $L\geq0$, $\lambda'\geq1$, $c'\geq0$ so that every $(L;M;3,2C_1)$-local Morse quasi-geodesic in $X$ is a global $(M';\lambda',c')$-Morse quasi-geodesic.

Take a large $k\in\mathbb{N}$ so that $d(o,g^ko)\geq \max\{L,c'+1\}$, and we also write $g$ for $g^k$ for easy notation. Then we take an $L$-separated subset $H'\subseteq H$ so that $H\subseteq N_L(H')$ by Lemma \ref{lem:SepNet}.

Given a word $W=h_1gh_2g\cdots h_ng\in\mathbb{W}(H',\{g\})$ with $h_i\in H'$, by the claim we have that the path labelled by $W$
\[[o,h_1o]\cdot h_1([o,go])\cdot h_1g([o,h_2o])\cdot\cdots\cdot h_1gh_2g\cdots h_{n-1}gh_n([o,go])=:\gamma\]
%\[[o,h_1o]\cdot h_1([o,go])\cdot h_1g([o,h_2o])\cdot\cdots\cdot h_1gh_2g\cdots h_{n-1}g([o,h_no])\cdot h_1gh_2g\cdots h_{n-1}gh_n([o,go])=:\gamma\]
is a $(L;M;3,2C_1)$-local Morse quasi-geodesic, and hence a global $(M';\lambda',c')$-Morse quasi-geodesic.
So $d(o,\iota(W)o)\geq\frac{1}{\lambda'}(\ell(\gamma)-c')\geq\frac{1}{\lambda'}(d(o,go)-c')\geq\frac{1}{\lambda'}>0$, which implies that $W$ is nontrivial in $G$. Therefor, the map $\iota:\mathbb{W}(H',\{g\})\to G$ is injective.
\end{proof}

\begin{proof}[Proof of Theorem \ref{main thm}]
Suppose that $H\leq G$ is a stable subgroup. Then $H$ is obviously quasi-convex in $G$, and by Lemma \ref{lem:Divergence}, the Poincar\'{e} series $\mathcal{P}_{H}(s)$ is divergent at $\omega(H)$.
By Lemma \ref{lem:Injective}, there exists a Morse element $g\in G$ and $H'\subseteq H$ so that $\omega(H')=\omega(H)$ and the evaluation map $\iota:\mathbb{W}(H',\{g\})\to G$ is injective. And by lemma \ref{degrowth}, the Poincar\'{e} series $\mathcal{P}_{H'}(s)$ is convergent at $\omega(\iota(\mathbb{W}(H',\{g\})))=:\omega_0$ and hence at $\omega(G)\geq\omega_0$. Noticing that the two series $\mathcal{P}_{H}(s)$ and $\mathcal{P}_{H'}(s)$ are controlled by each other in a common point $s$, $\mathcal{P}_{H'}(s)$ is divergent at $\omega(H)$, thus we have that $\omega(H)<\omega(G)$.
\end{proof}

\bibliographystyle{plain}
\bibliography{Ref}

\bigskip
{School of Mathematics, Hunan University, Changsha, Hunan, 410082, P.R.China}

{\tt Email: hansz@hnu.edu.cn}

\bigskip
{School of Mathematical Sciences \& LPMC, Nankai University, Tianjin 300071, P.R.China}

{\tt Email: qingliu@nankai.edu.cn}

\end{document}